\newcommand{\norm}[1]{\lVert#1\rVert}
\newcommand{\bignorm}[1]{\left\lVert#1\right\rVert}
\theoremstyle{plain}
\newtheorem{theorem}{Theorem}[section]
\newtheorem{corollary}[theorem]{Corollary}
\newtheorem{lemma}[theorem]{Lemma}
\newtheorem{proposition}[theorem]{Proposition}
\theoremstyle{plain}
\newtheorem{remark}[theorem]{Remark}
\newtheorem{example}[theorem]{Example}
\def\R{{\rm I\kern-2ptR}}
\def\N{{\rm I\kern-2ptN}}
\def\emptyset{\mathop{\raisebox{.1ex}{$\not
\mathrel{\raisebox{.1ex}{$\scriptstyle\bigcirc$}}$}}}
\def\qed{\hskip .6em \raise1.8pt\hbox{\vrule height4pt
width6pt depth2pt}}
\def\qedd{\hskip .4em \raise1.8pt\hbox{\vrule height3pt
width5pt depth1.8pt}}
\def\sq{\hskip .6em \raise1.8pt\hbox{\vrule
height4pt width6pt depth2pt}}
\def\leaderfill{\leaders\hbox to 1em{\hss.\hss}\hfill}
\begin{document}

\title[]{Almost-invariant and essentially-invariant halfspaces}

\author{Gleb Sirotkin}
\address{Department of Mathematical Sciences, Northern Illinois University, DeKalb, IL 60115}
\email{gsirotkin@niu.edu}

\author{Ben Wallis}
\address{Department of Mathematical Sciences, Northern Illinois University, DeKalb, IL 60115}
\email{bwallis@niu.edu}

\begin{abstract} 
In this paper we study sufficient conditions for an operator to have an almost-invariant half-space.
As a consequence, we show that if $X$ is an infinite-dimensional complex Banach space then every operator $T\in\mathcal{L}(X)$ admits an essentially-invariant half-space. We also show that whenever a closed algebra
of operators possesses a common AIHS, then it has a common invariant half-space as well. 
\end{abstract}
 
\thanks{The authors thank William B. Johnson for his helpful comments.\\\indent Mathematics Subject Classification: 15A03, 15A18, 15A60, 47L10, 47A10, 47A11, 47A15\\\indent Keywords:  functional analysis, Banach spaces, surjectivity spectrum, point spectrum, invariant subspaces.}

\maketitle
\theoremstyle{plain}

\section{Introduction and the first result}

The invariant subspace problem in its full generality was famously closed in the negative by Enflo in a series of papers beginning with \cite{En76} and ending with \cite{En87}, where he constructed an operator acting on a separable Banach space which fails to admit any nontrivial invariant subspace, that is, invariant subspace which is nonzero, proper, and closed.  
Independently Charles Read produced an outstanding series of examples \cite{Re84, Re85, Re86, Re91, Re97}, in particular, an operator acting on $\ell_1$  \cite{Re85} which fails to admit any nontrivial invariant subspace. Thus, the answer is still negative even for ``nice'' Banach spaces.  Current work on the invariant subspace problem now focuses on the separable Hilbert space case, which remains unsolved for the time being.

A related but independent problem was posed in \cite{APTT09}.  Let us say that a subspace $Y$ of a Banach space $X$ is {\bf almost-invariant} under $T$ whenever $TY\subseteq Y+E$ for some finite-dimensional {\bf error} subspace $E$.  In this case, the smallest possible dimension of $E$ we call the {\bf defect}.  To make things nontrivial, we restrict our attention to the case where $Y$ is a {\bf halfspace}, that is, a closed subspace with both infinite dimension and infinite codimension in $X$.  Let us use the abbreviations {\bf AIHS} for ``almost-invariant halfspace'' and {\bf IHS} for ``invariant halfspace.''  Then we can ask, does every operator on an infinite-dimensional Banach space admit an AIHS?  Let us call this the {\bf AIHS problem}.

It turns out that for closed algebras of operators existence of a common AIHS and IHS is equivalent. In \cite[Theorem 2.3]{MPR13}, this was proved for the case when AIHS is complemented, but the proof can be adapted to the general case as well. Let us demonstrate this.  Note also that, although our other results are validated only for complex Banach spaces, this next theorem works even for real Banach spaces.

\begin{theorem}\label{common-IHS}Let $X$ be a (real or complex) Banach space, and let $\mathcal{A}$ be a norm-closed algebra of operators in $\mathcal{L}(X)$.  If there exists a halfspace $Y$ of $X$ which is almost-invariant under every $A\in\mathcal{A}$, then there exists a halfspace $Z$ which is invariant under every $A\in\mathcal{A}$.\end{theorem}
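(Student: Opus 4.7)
The plan is to adapt the argument of \cite[Theorem~2.3]{MPR13} by replacing the projection onto a complement of $Y$---which need not exist in general---with the quotient map $q : X \to X/Y$. Without loss of generality I would assume $I \in \mathcal A$, since enlarging $\mathcal A$ to $\mathcal A + \mathbb{C} I$ preserves the norm-closedness, algebra structure, common AIHS hypothesis (as $Y$ is trivially $I$-invariant), and the conclusion (a common IHS for the larger algebra is one for $\mathcal A$). For each $A \in \mathcal A$, almost-invariance of $Y$ under $A$ is equivalent to the induced bounded operator $\bar A := qA|_Y : Y \to X/Y$ being of finite rank equal to the defect.

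The first technical step would be a Baire category argument for a uniform rank bound. Because $\mathcal A$ is norm-closed and hence a Banach space, and the sets $\{A \in \mathcal A : \rank \bar A \leq n\}$ are norm-closed and cover $\mathcal A$, Baire category combined with scalar-homogeneity and absorption yields an $N < \infty$ with $\rank \bar A \leq N$ for every $A \in \mathcal A$. The natural candidate for the common IHS is then
\[
  Z := \overline{\mathcal A Y},
\]
which is $\mathcal A$-invariant by multiplicativity and contains $Y$, hence is infinite-dimensional. Writing $W := \overline{\spa} \bigcup_{A \in \mathcal A} \bar A(Y) \subseteq X/Y$, one checks $qZ = W$, so $\codim_X Z = \codim_{X/Y} W$; thus $Z$ is a halfspace whenever $W$ has infinite codimension in $X/Y$, and in that ``easy'' regime the proof is complete.

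The main obstacle is the case when $W$ has \emph{finite} codimension in $X/Y$, i.e., when the action of $\mathcal A$ pushes $Y$ nearly over all of $X$. In this scenario the candidate should switch to the common kernel
\[
  K := \bigcap_{A \in \mathcal A} \ker \bar A \subseteq Y,
\]
which a short calculation using the algebra structure renders $\mathcal A$-invariant: for $y \in K$ and $A \in \mathcal A$, $\bar A y = 0$ gives $Ay \in Y$, and for each $B \in \mathcal A$ one has $\bar B(Ay) = \overline{BA}(y) = 0$ because $BA \in \mathcal A$, whence $Ay \in K$. Since $K \subseteq Y$ it already has infinite codimension in $X$; the delicate point---and the main obstacle---is to argue that in this pathological case $K$ is necessarily infinite-dimensional, so that at least one of $Z$ or $K$ is a halfspace. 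This verification, in which the uniform rank bound, the norm-closedness of $\mathcal A$, and the infinite-dimensionality of $X/Y$ all interact, is the step that requires the most care when lifting the complemented argument of \cite[Theorem~2.3]{MPR13} to the present setting.
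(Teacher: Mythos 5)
Your first two steps are fine: the Baire-category uniform bound is essentially Popov's theorem, which the paper simply cites, and the ``easy regime'' (where $W=\overline{\spa}\bigcup_{A\in\mathcal A}\bar A(Y)$ has infinite codimension in $X/Y$, so $\overline{\mathcal A Y}$ is a halfspace) is correct. The genuine gap is the dichotomy you rely on afterwards: it is \emph{false} that when $W$ has finite codimension the common kernel $K=\bigcap_{A\in\mathcal A}\ker\bar A$ must be infinite-dimensional. Concretely, let $H$ be a separable Hilbert space, $X=H\oplus H$, $Y=H\oplus 0$, fix a nonzero $u\in H$ and a nonzero $\phi\in H^*$, and let $\mathcal A$ consist of all operators of the form
\begin{equation*}
A_{\alpha,\beta,v,\psi}=\begin{pmatrix}\alpha I & 0\\ u\otimes\psi+v\otimes\phi & \beta I\end{pmatrix},
\qquad \alpha,\beta\in\mathbb{C},\ v\in H,\ \psi\in H^*,
\end{equation*}
where $(u\otimes\psi)(x)=\psi(x)u$. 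A direct computation shows the lower-left corner of a product is $\alpha_2(u\otimes\psi_1+v_1\otimes\phi)+\beta_1(u\otimes\psi_2+v_2\otimes\phi)$, which has the same form, so $\mathcal A$ is a unital algebra, and its norm closure keeps this form; every element has defect at most $2$ relative to $Y$, so the hypotheses of the theorem hold. Yet taking $\psi=0$ shows every line $[v]$ lies in some $\bar A(Y)$, so $W=X/Y$ and $\overline{\mathcal A Y}=X$, while taking $v=0$ and $\psi$ arbitrary forces $K=\{0\}$. So both of your candidates fail simultaneously, and the ``delicate point'' you defer is not just hard but wrong; the plan cannot be completed as described.

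The missing idea is the paper's perturbation argument, which produces a third candidate. Pick $T\in\mathcal A$ whose defect attains the maximal value $M$, set $V:=Y\cap T^{-1}Y=\ker J_T$ (finite-codimensional in $Y$, hence a halfspace), and show that $J_A(V)\subseteq J_T(Y)$ for \emph{every} $A\in\mathcal A$: if some $J_Av$ were independent of $J_Ty_1,\dots,J_Ty_M$ (where $Y=V\oplus[y_i]_{i=1}^M$), then for small $\lambda>0$ the operator $\lambda A+T\in\mathcal A$ would have defect $M+1$, contradicting maximality. Hence $\mathcal A V\subseteq Y+F$ for one fixed $M$-dimensional $F$, and $Z:=\overline{\spa}\{Av: A\in\mathcal A,\ v\in V\}$ is $\mathcal A$-invariant, contains $V$ (unitality), and has infinite codimension because $Y+F$ does. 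In the example above this yields $Z\subseteq Y\oplus[u]$, which is indeed a common IHS, illustrating why replacing $Y$ by $V$ (rather than shrinking to the common kernel or enlarging to $\overline{\mathcal A Y}$) is the essential step.
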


\begin{proof}Let us assume, without loss of generality, that $\mathcal{A}$ is a unital algebra satisfying the hypothesis of the theorem. By a result of Popov (\cite[Theorem 2.7]{Po10}), there is number $M\in\mathbb{N}$ such that for every $A\in\mathcal{A}$ we have $AY\subseteq Y+F_A$ with $\text{dim}(F_A)\leq M$.  

For each $A\in\mathcal{A}$, define $J_A\in\mathcal{L}( Y, X/ Y)$ by $J_A:=qA|_{ Y}$, where $q: X\to X/ Y$ is the canonical quotient map. Let us also assume that there are vectors $y_1, \dots, y_M \in Y$ and that there is $T\in \mathcal{A}$ such that for $W := J_T( [y_i]_{i=1}^M)$ we have $\text{dim}(W)= M$. It was proved in \cite[Lemma 3.4]{Po10} that if $Y$ is an AIHS under $T$ and $V:=Y\cap T^{-1}(Y)$ then the defect of $Y$ under $T$ is precisely $\text{dim}(Y/V)$.  Thus, $\text{dim}(Y/V)=M$.  Notice that $V=\mathcal{N}(J_T)$ and $Y = V \oplus [y_i]_{i=1}^M$. 

Fix any $A \in \mathcal{A}$ and any vector $v \in V$ and let us show that $J_Av \in W$ holds. Indeed, if not, let $\delta>0$ be the minimum of 
$\|\sum_{i = 1}^M \alpha_i J_Ty_i + \alpha_{M+1}J_Av\|$ over the set of all $(\alpha_i)_{i=1}^{M+1}$ from the unit sphere of $\mathbb{K}^{M+1}$. The minimum is attained
due to the compactness of the sphere and, thus, strictly positive due to the linear independence of the vectors $J_Ty_1,\dots, J_Ty_M, J_Av$.  
Again by compactness, the maximum of $\norm{\sum_{i=1}^M\alpha_i J_Ay_i}$ over the unit sphere in $\mathbb{K}^{M+1}$ exists.  Hence, we could find $\lambda >0$ small enough to have $\lambda\|\sum_{i = 1}^M \alpha_iJ_Ay_i \| <\delta$. Then, 
using the fact that $J_Tv=0$, 
we obtain $M+1$ linearly independent vectors
\[(\lambda J_A +  J_T)y_1,\cdots,(\lambda J_A + J_T)y_M,(\lambda J_A + J_T)\left({1 \over \lambda}v\right).\]
We can see this due to the fact that if $(\alpha_i)_{i=1}^{M+1}$ is in the unit sphere of $\mathbb{K}^{M+1}$ then we would have
\begin{multline*}\bignorm{\sum_{i=1}^M\alpha_i(\lambda J_A +  J_T)y_i+\alpha_{M+1}(\lambda J_A + J_T)\left({1 \over \lambda}v\right)}\\\geq\bignorm{\sum_{i=1}^M\alpha_iJ_Ty_i+\alpha_{M+1}J_Av}-\lambda\bignorm{\sum_{i=1}^M\alpha_iJ_Ay_i}>0.\end{multline*}
Since $\lambda J_A +  J_T = J_{\lambda A +  T}$ we obtain a contradiction to the fact that the defect of $\lambda A + T \in \mathcal{A}$ must be no larger than $M$. 

Thus, we have shown that there is an $M$-dimensional subspace $F_T\subset X$ such that for every $A \in \mathcal{A}$ we have $AV \subseteq Y+F_T$. 
Now we have the following inclusion 
\[Z=\overline{\text{span}\{Av:A\in\mathcal{A},v\in V\}}\subset Y+F_T.\]
Moreover, since $V$ is finite-codimensional in a halfspace $Y$, it is a halfspace itself. Therefore, $Z$ is infinite-dimensional as $\mathcal{A}$ is unital. 
We conclude by observing that $Z$ is a half-space which is $\mathcal{A}$-invariant due to  $\mathcal{A}$ being an algebra.\end{proof}

The connections between the existence of an invariant subspace and the existence of an AIHS for an individual operator are different.  If $T\in\mathcal{L}(X)$, $X$ an infinite-dimensional complex Banach space, fails to admit an invariant subspace, then $T$ has no eigenvalues and hence, by \cite[Theorem 2.3]{PT13}, admits an AIHS of defect $\leq 1$.  Conversely, if $T$ admits no AIHS of defect $\leq 1$ then again by \cite[Theorem 2.3]{PT13} it admits nontrivial invariant subspaces (although, not necessarily halfspaces).  

Unlike the invariant subspace problem, the AIHS problem remains open in its full generality, although significant partial results have been obtained in recent years.  Namely, in \cite{PT13}, Popov and Tcaciuc proved that every operator acting on a complex, infinite-dimensional reflexive space admits an AIHS of defect $\leq 1$, and then in \cite{SW14} the authors showed that if $X$ is an infinite-dimensional complex Banach space and $T\in\mathcal{L}(X)$ is quasinilpotent or weakly compact then it admits an AIHS of defect $\leq 1$.  Together with \cite[Remark 2.9]{MPR12} and \cite[Theorem 2.3]{PT13}, it follows that if $T$ has only countably many eigenvalues (for example, if $T$ is strictly singular) then it admits an AIHS of defect $\leq 1$.  In section 2, we will show that if $T$ commutes with a strictly singular operator with infinite-dimensional range then it admits an AIHS of defect $\leq 1$.  Note that results of this kind draw heavily upon the tools of spectral theory, and so do not obviously carry over to the case where $X$ is a real Banach space.

Suppose the answer to the AIHS problem is negative in the complex case.  In section 3, we describe the structure of a hypothetical operator acting on an infinite-dimensional complex Banach space, but which fails to admit an AIHS of defect $\leq 1$.  In particular, it simultaneously exhibits left-shift-like and right-shift-like behavior, in a certain sense which we will make explicit later on.  This may help to guide future construction of a counter-example, that is, an operator acting on an infinite-dimensional complex Banach space which fails to admit an AIHS.

In lieu of a solution to the AIHS problem, we can consider a weaker condition to almost-invariance.  In \cite[Proposition 1.3]{APTT09} the authors observed that $Y$ is almost-invariant under $T\in\mathcal{L}(X)$ with defect $\leq d$ if and only if $Y$ is invariant under $T+F$ for some $F\in\mathcal{F}(X)$ of rank $\leq d$, where $\mathcal{F}$ denotes the class of finite-rank operators.  Let us say that a $Y$ is {\bf essentially-invariant} under $T\in\mathcal{L}(X)$ just in case it is invariant under $T+K$ for some $K\in\mathcal{K}(X)$, where $\mathcal{K}$ denotes the class of compact operators.  We can then ask, does every operator acting on an infinite-dimensional Banach space admit an essentially-invariant halfspace ({\bf EIHS})?  Note that every AIHS is an EIHS, and making this a formally weaker question.

To the best of our knowledge, EIHS's made their first appearance in the literature in 1971, in a paper by Brown and Pearcy.  There, they showed that every operator on a complex infinite-dimensional Hilbert space admits an EIHS (\cite[Corollary 3.2]{BP71}).  Later, in \cite{Ho78}, Hoffman summarized the results of a few earlier papers, namely \cite{FSW72}, \cite{Vo76}, and \cite{Ar77}.  By that time he was able to observe that every operator on a separable, infinite-dimensional complex Hilbert space is essentially reducing, that is, admits a halfspace $Y$ such that both $Y$ and its orthogonal complement are essentially-invariant under the operator.
In section 4, we will show further that every operator acting on an infinite-dimensional complex Banach space admits an EIHS.  Using the same techniques, we will also prove that for any infinite-dimensional complex Banach space $X$, the set of operators admitting an AIHS of defect $\leq 1$ is norm-dense in $\mathcal{L}(X)$.

All notation in this paper is standard, such as seen in \cite{LT77}, \cite{AA02}, and \cite{AK06}.
Let us just mention three kinds of spectrum of an operator $T:X\to X$ that we constantly use in this paper: 
the spectrum $\sigma(T) = \{\lambda \in \mathbb{C}: \lambda - T \text{ is not invertible}\}$, 
the point spectrum $\sigma_p(T) = \{\lambda \in \mathbb{C}: \lambda - T \text{ is not injective}\}$, and
the surjectivity spectrum $\sigma_{su}(T) = \{\lambda \in \mathbb{C}: \lambda - T \text{ is not surjective}\}$.

\section{An analog for Lomonosov's Theorem}

Let us formally state and prove an important result which was alluded to but not explicitly given in \cite{SW14}.

\begin{theorem}[\cite{SW14}]\label{countable}Let $X$ be an infinite-dimensional complex Banach space, and let $T\in\mathcal{L}(X)$.  If $T$ has no more than countably many eigenvalues then it admits an AIHS of defect $\leq 1$.\end{theorem}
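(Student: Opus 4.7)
The plan is to deduce the statement by combining the two results cited in the paragraph immediately preceding it. By \cite[Theorem 2.3]{PT13}, if $T$ has no eigenvalues at all then $T$ already admits an AIHS of defect $\leq 1$, so the whole task is to handle the case where $\sigma_p(T)$ is nonempty but countable; for that, \cite[Remark 2.9]{MPR12} is the supplementary ingredient.

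My first step would be a quick case analysis on the eigenspaces. Pick any $\lambda \in \sigma_p(T)$ and set $E := \ker(T - \lambda I)$. If $E$ is infinite-dimensional, then either $E$ is already an invariant halfspace (done, with defect $0$), or $E$ is finite-codimensional in $X$, in which case any halfspace $Y$ chosen inside $E$ remains a halfspace of $X$ because $\codim_X Y = \codim_X E + \codim_E Y$ is infinite, and $Y$ is invariant because $T$ acts as the scalar $\lambda$ on $E$. So without loss of generality I may assume that \emph{every} eigenspace of $T$ is finite-dimensional.

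In this remaining regime, the plan is to exploit countability to exhibit a spectral value which is not an eigenvalue. On a complex Banach space $\sigma_{su}(T)$ is a nonempty closed set containing $\partial\sigma(T)$, so as long as $\sigma(T)$ is not a single point, $\sigma_{su}(T)$ is uncountable and therefore must contain some $\mu \notin \sigma_p(T)$. I expect \cite[Remark 2.9]{MPR12} to convert this fact into an AIHS of defect $\leq 1$ --- either directly, via the closed range or the approximate cokernel vectors of $T - \mu I$, or by feeding $\mu$ into the machinery behind \cite[Theorem 2.3]{PT13}, which handles ``no eigenvalue at $\mu$''. The degenerate case $\sigma(T) = \{\lambda\}$ with $\lambda \in \sigma_p(T)$ forces $T - \lambda I$ to be quasinilpotent and is already covered by the earlier result of \cite{SW14}.

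The main obstacle I foresee is the ``all eigenspaces finite-dimensional'' case: neither $\ker(T - \lambda I)$ nor $(T - \lambda I)(X)$ need be a halfspace, so one genuinely needs the countability of $\sigma_p(T)$ to access a useful non-eigenvalue in the surjectivity spectrum. Verifying that \cite[Remark 2.9]{MPR12} really converts this setup into the hypothesis of \cite[Theorem 2.3]{PT13} --- or else supplies the AIHS directly --- is the technical heart of the argument, though judging from the surrounding discussion in the excerpt this step appears to be routine once both results are in hand.
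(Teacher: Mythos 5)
Your first reduction (infinite-dimensional eigenspaces) is fine, but the heart of your argument rests on a false claim: that if $\sigma(T)$ is not a singleton then $\sigma_{su}(T)$ is uncountable and hence contains a non-eigenvalue. This fails precisely in the case the hypothesis is designed to allow: $\sigma(T)$ countable but not a singleton. For example, a diagonal operator on $\ell_2$ with diagonal entries $\{0\}\cup\{1/n : n\in\mathbb{N}\}$, each occurring as an eigenvalue, has $\sigma_{su}(T)\subseteq\sigma(T)$ countable and consisting entirely of eigenvalues, so your method produces no $\mu\in\sigma_{su}(T)\setminus\sigma_p(T)$ to feed into anything. (Such operators do of course have invariant halfspaces, but your argument gives no mechanism for finding one; and note also that \cite[Theorem 2.3]{PT13} requires the non-eigenvalue to lie in the \emph{boundary} of the spectrum, not merely in $\sigma_{su}(T)$.) Your treatment of the ``degenerate case'' only covers $\sigma(T)$ literally a single point, which does not cover countable, non-singleton, all-eigenvalue spectra.

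You have also misassigned the role of \cite[Remark 2.9]{MPR12}. It is not a device for converting a non-eigenvalue into an AIHS (that is exactly what \cite[Theorem 2.3]{PT13} does); rather it is the reduction step the paper uses to escape the gap above: assuming $T$ has no IHS (otherwise we are done, defect $0$), there is a finite-codimensional $T$-invariant subspace $W$ such that $\sigma(T|_W)$ is a connected component of $\sigma(T)$. Connectedness restores the dichotomy you wanted: either $\sigma(T|_W)$ is a singleton, so $T|_W$ is a translate of a quasinilpotent operator and \cite[Theorem 3.11]{SW14} applies, or $\sigma(T|_W)$ is a nondegenerate continuum, whence $\partial\sigma(T|_W)$ is uncountable and the countability of $\sigma_p(T|_W)\subseteq\sigma_p(T)$ yields a boundary non-eigenvalue to which \cite[Theorem 2.3]{PT13} applies; in either case an AIHS of defect $\leq 1$ for $T|_W$ gives one for $T$. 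Without this reduction to a connected spectrum (or some substitute, e.g.\ a Riesz-projection argument splitting off isolated spectral points), your proof does not go through.
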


\begin{proof}In \cite[Remark 2.9]{MPR12} it was observed that if $T$ fails to admit an IHS then there exists a finite-codimensional $T$-invariant subspace $W$ such that $\sigma(T|_W)$ is a connected component of $\sigma(T)$.\footnote{This Remark appears in the preprint \cite{MPR12}, but not the published version \cite{MPR13}.  Its removal has to do with organizational purposes and the near-simultaneous publication of \cite{PT13} rather than any logical problem.  The same thing happened for \cite[Lemma 2.1]{MPR12}, which we use in the proof of Lemma \ref{countable-commutes}, but which was also stricken from \cite{MPR13}.}  Then, it was shown in \cite[Theorem 3.11]{SW14} that every quasinilpotent operator acting on an infinite-dimensional complex Banach space admits an AIHS of defect $\leq 1$.  Hence, if $\sigma(T|_W)$ is a singleton then we are done.  Otherwise, since $\sigma(T|_W)$ is a connected and compact subset of $\mathbb{C}$ which is not a singleton, $\partial\sigma(T|_W)$ must be uncountable.  It was shown in \cite[Theorem 2.3]{PT13} that if the boundary of the spectrum contains a non-eigenvalue then the operator admits an AIHS of defect $\leq 1$.  Since $\sigma_p(T)$ and hence $\sigma_p(T|_W)$ is countable, such a non-eigenvalue must lie in the uncountable set $\partial\sigma(T|_W)$.  This gives us an AIHS of defect $\leq 1$ for $T|_W$, and hence also for $T$.\end{proof}

So, for example, any strictly singular operator acting on a complex infinite-dimensional Banach space admits an AIHS of defect $\leq 1$.  This is due to the fact that such operators always have countable spectrum (cf., e.g., \cite[Theorem 7.11]{AA02}).  However, we can say something considerably stronger.

\begin{theorem}\label{SScommutes}Let $X$ be an infinite-dimensional complex Banach space and $T\in\mathcal{L}(X)$.  If $T$ commutes with a strictly singular (or compact) operator with infinite-dimensional range, then $T$ admits an AIHS of defect $\leq 1$.\end{theorem}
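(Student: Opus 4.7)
The plan is to argue by contradiction: suppose $T$ admits no AIHS of defect $\leq 1$, so in particular no IHS (defect $0$). By Theorem \ref{countable}, $\sigma_p(T)$ must then be uncountable. The strategy is to exploit the tension between the uncountable $\sigma_p(T)$ and the at-most-countable spectrum of the strictly singular (or compact) operator $S$, using commutativity to transport information between the two operators.

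First I would establish that every eigenspace $\ker(T - \mu)$ of $T$ is finite-dimensional. Since $TS = ST$, each such eigenspace is $S$-invariant; by the no-IHS assumption it cannot itself be a halfspace. It also cannot be finite-codimensional, because in that case $T - \mu$ would be finite-rank, so $T$ would act as $\mu I$ on the infinite-dimensional closed subspace $\ker(T-\mu)$, and any closed subspace of $\ker(T-\mu)$ that is both infinite-dimensional and of infinite codimension inside $\ker(T-\mu)$ would then be an IHS for $T$ (such subspaces exist in every infinite-dimensional Banach space, e.g., via a basic sequence). Hence $\ker(T-\mu)$ is finite-dimensional.

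Next, being nonzero, finite-dimensional, $S$-invariant, and complex, each $\ker(T-\mu)$ contains an $S$-eigenvector $v_\mu$ corresponding to some $\lambda_\mu \in \sigma_p(S) \subseteq \sigma(S)$. Distinct $T$-eigenvalues $\mu$ force the family $\{v_\mu\}$ to be linearly independent. Strict singularity of $S$ makes $\sigma(S)$ at most countable, so by the pigeonhole principle there is a single $\lambda$ such that $\lambda_\mu = \lambda$ for uncountably many $\mu$.

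If $\lambda \neq 0$, then $\ker(S - \lambda)$ is finite-dimensional by standard spectral theory for strictly singular operators, contradicting that it contains uncountably many linearly independent $v_\mu$'s. Hence $\lambda = 0$, so $\ker S$ is forced to be infinite-dimensional; since $S$ has infinite-dimensional range, $\ker S$ cannot be finite-codimensional, so $\ker S$ is a halfspace. As $TS = ST$, the subspace $\ker S$ is $T$-invariant, producing an IHS for $T$ and yielding the desired contradiction. The main subtlety to get right is the dichotomy analysis ruling out finite-codimensional $\ker(T-\mu)$; once that is in place, the spectral pigeonhole carries the remainder of the argument.
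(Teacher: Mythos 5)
Your argument is correct, but it reaches the conclusion by a genuinely different route than the paper. The paper's proof first invokes \cite[Proposition 3.5]{SW14} to pass to a finite-codimensional $T$-invariant subspace $W=\overline{p(T)X}$ with $\sigma_p((T|_W)^*)=\emptyset$ (the polynomial form of $W$ is what makes $W$ invariant under $S$ as well), and then applies Lemma \ref{countable-commutes}, which works for any commuting $S$ with countable point spectrum that is not a scalar multiple of the identity: there the invariant halfspace is the closed span of countably many $T$-eigenvectors sharing one $S$-eigenvalue, and the hypothesis $\sigma_p((T|_W)^*)=\emptyset$ is what guarantees, via \cite[Proposition 3.9]{SW14}, that this span has infinite codimension; the infinite-dimensional range of $S$ enters only to ensure $S|_W\neq 0$. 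You instead use strict singularity more strongly: for $\lambda\neq 0$ the operator $S$ acts as $\lambda I$ on $\ker(S-\lambda)$, hence is bounded below there, so that kernel must be finite-dimensional (equivalently, $\lambda-S$ is Fredholm for strictly singular $S$), and your uncountable pigeonhole therefore forces the common $S$-eigenvalue to be $0$; then $\ker S$ itself is the $T$-invariant halfspace, the infinite-dimensional range hypothesis being used to give $\ker S$ infinite codimension (since $\dim X/\ker S=\dim SX$). Your route is shorter and bypasses both \cite[Proposition 3.5]{SW14} and \cite[Proposition 3.9]{SW14}, but it is tailored to strictly singular or compact $S$, whereas the paper's Lemma \ref{countable-commutes} and the remark following the proof buy a more general criterion (any commuting $S$ with countable point spectrum, infinite-dimensional range, and not a multiple of the identity on any finite-codimensional subspace). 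One minor point: your detour through ``$T-\mu$ would be finite rank'' is unnecessary when showing the eigenspaces of $T$ are finite-dimensional; an infinite-dimensional eigenspace always contains a closed subspace of infinite dimension and infinite codimension in $X$ (cf. \cite[Lemma 2.1]{MPR12}), and every subspace of an eigenspace is automatically $T$-invariant, so either case immediately yields an IHS.
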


\noindent This can be viewed as an analog to Lomonosov's Theorem, which states that any operator acting on an infinite-dimensional Banach space and commuting with a nonzero compact operator admits a nontrivial hyper-invariant closed subspace (\cite{Lo73}). Note that unless the AIHS problem has an affirmative answer in the complex case, simply being nonzero is insufficient.  To see this, let $T\in\mathcal{L}(X)$ be an operator acting on a complex Banach space $X$ which fails to admit an AIHS of defect $\leq 1$.  Then $T\oplus 0\in\mathcal{L}(X\oplus\mathbb{C})$ also fails to admit an AIHS of defect $\leq 1$, even though it commutes with the nonzero compact operator $0\oplus I_{\mathbb{C}}$, where $I_\mathbb{C}$ is the identity acting on $\mathbb{C}$.  On the other hand, Lomonosov's Theorem cannot be strengthened to use strictly singular operators instead of compact ones, since Read has famously constructed a strictly singular operator acting on a Banach space with no nontrivial invariant closed subspace (\cite{Re91}).

First, let us prove the following lemma.
\begin{lemma}\label{countable-commutes}Let $X$ be an infinite-dimensional complex Banach space, and suppose $S\in\mathcal{L}(X)$ and $T\in\mathcal{L}(X)$ satisfy the following properties.
\begin{itemize}\item[(i)]  $\sigma_p(S)$ is countable;
\item[(ii)]  $S$ is not a multiple of the identity, that is, $S\neq\mu I$ for any $\mu\in\mathbb{C}$;
\item[(iii)]  $\sigma_p(T^*)=\emptyset$; and
\item[(iv)]  $S$ commutes with $T$, that is, $ST=TS$.\end{itemize}
Then $T$ admits an AIHS of defect $\leq 1$.\end{lemma}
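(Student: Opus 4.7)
The plan is to apply Theorem \ref{countable} to $T$ whenever possible, and otherwise use the commuting operator $S$ to either produce an invariant halfspace for $T$ or derive a contradiction. If $\sigma_p(T)$ is countable, then Theorem \ref{countable} immediately yields an AIHS of defect $\leq 1$, so I would focus on the case when $\sigma_p(T)$ is uncountable.

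Write $E_\nu := \ker(T - \nu I)$ for each $\nu \in \sigma_p(T)$. If some $E_\nu$ is a halfspace, it is automatically a $T$-invariant halfspace, and we are done. If some $E_\nu$ has finite codimension in $X$, then $T - \nu I$ has finite-dimensional image; writing $T = \nu I + F$ and choosing any halfspace $Y$ containing $F(X)$ (possible since $F(X)$ is finite-dimensional) gives $TY \subseteq \nu Y + F(X) \subseteq Y$, again an IHS. So we may assume every $E_\nu$ is finite-dimensional. By (iv) each $E_\nu$ is $S$-invariant, so $S|_{E_\nu}$ admits an eigenvector $x_\nu \in E_\nu$ with some eigenvalue $\lambda(\nu) \in \sigma_p(S)$. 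Since $\sigma_p(T)$ is uncountable while $\sigma_p(S)$ is countable by (i), pigeonhole produces a single $\lambda_0 \in \sigma_p(S)$ with uncountably many preimages. The corresponding $x_\nu$ are $T$-eigenvectors for pairwise distinct $T$-eigenvalues, hence linearly independent, and all lie in $\ker(S - \lambda_0 I)$; so this kernel is infinite-dimensional and, since $T$ commutes with $S - \lambda_0 I$, also $T$-invariant. If it is a halfspace, we are done; otherwise it has finite codimension in $X$.

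In this last subcase, $F := S - \lambda_0 I$ has finite rank, and by (ii) it is nonzero. Commutation gives $T^* F^* = F^* T^*$, so the nonzero finite-dimensional subspace $F^*(X^*) \subseteq X^*$ is $T^*$-invariant, and $T^*$ restricted to this finite-dimensional complex space must have an eigenvalue, contradicting (iii). I expect the main obstacle to be the realization that an uncountable $\sigma_p(T)$, combined with (i), (ii), and (iv), forces the commutant of $T$ to contain a nonzero finite-rank operator; once this is in hand, the adjoint argument closes the remaining case using (iii).
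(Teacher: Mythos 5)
Your proof is correct, and its first half follows the same skeleton as the paper's: reduce to the case of uncountable $\sigma_p(T)$ via Theorem \ref{countable}, note that each eigenspace of $T$ is $S$-invariant by (iv), reduce to finite-dimensional eigenspaces, pick an $S$-eigenvector in each, and pigeonhole over the countable set $\sigma_p(S)$ to obtain infinitely many linearly independent $T$-eigenvectors sharing a single $S$-eigenvalue $\lambda_0$. The endgame, however, is genuinely different. The paper takes the closed span of a sequence of these eigenvectors, uses (ii) to rule out that this span equals $X$, and then invokes \cite[Proposition 3.9]{SW14} (if $\sigma_p(T^*)=\emptyset$ then every infinite-dimensional closed $T$-invariant subspace is either $X$ or a halfspace), citing also \cite[Lemma 2.1]{MPR12} to dispose of infinite-dimensional eigenspaces. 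You instead work with the full kernel $\mathcal{N}(S-\lambda_0)$, and when it fails to be a halfspace you observe that $S-\lambda_0$ is a finite-rank operator, nonzero by (ii), commuting with $T$, so the range of its adjoint is a nonzero finite-dimensional $T^*$-invariant subspace of $X^*$, which forces an eigenvalue of $T^*$ and contradicts (iii); similarly you handle a finite-codimensional eigenspace by hand, writing $T=\nu I+F$ with $F$ of finite rank and taking any halfspace containing $F(X)$. What your route buys is a self-contained argument: it reproves exactly the special cases of the two cited results that are needed, and it isolates the role of (iii) as nothing more than forbidding nonzero finite-dimensional $T^*$-invariant subspaces. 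What the paper's route buys is brevity and a slightly more explicit conclusion (the invariant halfspace is realized as a closed span of common eigenvectors), at the cost of leaning on the external propositions.
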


\begin{proof}We may assume $\sigma_p(T)$ is uncountable by Theorem \ref{countable}, say $\sigma_p(T)=(\lambda_\alpha)_{\alpha\in A}$ for some uncountable index set $A$.  Notice that each eigenspace $\mathcal{N}(\lambda_\alpha-T)$, $\alpha\in A$, is $S$-invariant, since given $u\in\mathcal{N}(\lambda_\alpha-T)$ we get
\[TSu=STu=\lambda_\alpha Su\]
so that $Su$ is either zero or a $\lambda_\alpha$-eigenvector under $T$.  Furthermore, we can assume that each such eigenspace is finite-dimensional since otherwise, due to the fact that every infinite-dimensional closed subspace contains a halfspace (\cite[Lemma 2.1]{MPR12}), it would contain an IHS.  Recall that any linear operator acting on a finite-dimensional complex vector space admits an eigenvalue, and hence an eigenvector.  In particular, for each $\alpha\in A$ we can find an $S$-eigenvector $v_\alpha\in\mathcal{N}(\lambda_\alpha-T)$.  This gives us an uncountable set $(v_\alpha)_{\alpha\in A}$ of linearly independent $S$-eigenvectors, which are also $T$-eigenvectors.  Let $(\mu_\alpha)_{\alpha\in A}$ be the corresponding set of 
eigenvalues under $S$.  Since $\sigma_p(S)$ is countable, it must be that $(\mu_\alpha)_{\alpha\in A}$ has only countably many distinct values.  Then we can find a sequence $(\alpha_n)$ such that $(v_{\alpha_n})$ all share the same $S$-eigenvalue, say $\mu$.  So, it cannot be that $[v_{\alpha_n}]=X$, else we would have $S=\mu I$, contradicting the fact that $S$ is not a multiple of the identity.  However, given that each $v_{\alpha_n}$ is also a $T$-eigenvector, $[v_{\alpha_n}]$ is an infinite-dimensional $T$-invariant closed subspace.  Recall that if $\sigma_p(T^*)=\emptyset$ then every infinite-dimensional $T$-invariant subspace is either the whole space or a halfspace (\cite[Proposition 3.9]{SW14}).  In particular, $[v_{\alpha_n}]$ is an IHS under $T$.\end{proof}

\noindent Thus, we have the following immediate consequence.

\begin{proposition}\label{SScommutes1}Let $X$ be an infinite-dimensional complex Banach space and $T\in\mathcal{L}(X)$ satisfies the following conditions.
\begin{itemize}\item[(i)]  $\sigma_p(T^*)=\emptyset$; and
\item[(ii)]  $T$ commutes with a nonzero strictly singular (or compact) operator.\end{itemize}
Then $T$ admits an AIHS of defect $\leq 1$.\end{proposition}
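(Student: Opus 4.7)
The plan is to derive the proposition directly from Lemma \ref{countable-commutes} by verifying that its four hypotheses hold for the given operator $T$ and for the nonzero strictly singular (or compact) operator $S$ with which $T$ commutes. Conditions (iii) and (iv) of the lemma are exactly our standing assumptions (i) and (ii), so only (i) and (ii) of the lemma require justification.

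First, I would check that $\sigma_p(S)$ is countable. Since $S$ is strictly singular (and in particular if $S$ is compact), its spectrum is countable by the classical spectral theory of strictly singular operators (cf.\ \cite[Theorem 7.11]{AA02}), and hence its point spectrum $\sigma_p(S) \subseteq \sigma(S)$ is countable as well. This gives condition (i) of the lemma.

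Next, I would verify that $S$ is not a scalar multiple of the identity. By hypothesis $S \neq 0$, so it suffices to rule out $S = \mu I$ for $\mu \neq 0$; but on an infinite-dimensional Banach space $X$ the operator $\mu I$ with $\mu \neq 0$ is invertible, hence neither strictly singular nor compact. Thus $S$ cannot be any scalar multiple of the identity, and condition (ii) of the lemma holds.

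With all four hypotheses of Lemma \ref{countable-commutes} verified, the lemma applies and yields an AIHS of defect $\leq 1$ for $T$. No obstacle is expected: the proposition is essentially a repackaging of the lemma specialized to the most natural source of a commuting operator $S$ with countable point spectrum, namely a nonzero strictly singular or compact operator, and each hypothesis translation reduces to a one-line verification.
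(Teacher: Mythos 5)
Your proposal is correct and follows essentially the same route as the paper: invoke Lemma \ref{countable-commutes}, using the spectral theorem for strictly singular operators to get countability of $\sigma_p(S)$ and the observation that a nonzero strictly singular (or compact) operator on an infinite-dimensional space cannot be a scalar multiple of the identity. Your justification of the latter point is just a slightly more explicit version of what the paper dismisses with ``clearly.''
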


\begin{proof}Let $S\in\mathcal{L}(X)$ be a nonzero strictly singular operator commuting with $T$.  Clearly, $S$ is not a multiple of the identity, and by the Spectral Theorem for strictly singular operators (cf., e.g., \cite[Theorem 7.11]{AA02}), $\sigma(S)$ and hence $\sigma_p(S)$ are countable subsets of $\mathbb{C}$.  Thus we can apply Lemma \ref{countable-commutes}.\end{proof}

Now, we are ready to prove Theorem \ref{SScommutes}.
\begin{proof}[Proof of Theorem \ref{SScommutes}]Consider a strictly singular operator $S\in\mathcal{L}(X)$ which commutes with $T$ and has infinite-dimensional range.  According to \cite[Proposition 3.5]{SW14}, there exists a finite-codimensional and $T$-invariant closed subspace $W$ of $X$ such that $\sigma_p(T|_W^*)=\emptyset$.  In fact, if we look at the proof to \cite[Proposition 3.5]{SW14}, we see that $W$ has the form $W=\overline{p(T)X}$ for some polynomial $p$.  Note that 
\[SW\subseteq\overline{SW}=\overline{S\overline{p(T)X}}=\overline{Sp(T)X}=\overline{p(T)SX}\subseteq\overline{p(T)X}=W.\]
so that $W$ is $S$-invariant.  Thus, for any $w\in W$ we have
\[S|_WT|_Ww=STw=TSw=T|_WS|_Ww\]
so that $S|_W$ commutes with $T|_W$.  Since $S$ is strictly singular, so is $S|_W$, and it is nonzero since $S$ has infinite-dimensional range.  Thus we can apply Proposition \ref{SScommutes1} to obtain an AIHS of defect $\leq 1$ for $T|_W$, and hence also for $T$.\end{proof}

\begin{remark}More generally, to obtain an AIHS under $T\in\mathcal{L}(X)$ of defect $\leq 1$, where $X$ is an infinite-dimensional complex Banach space, it is enough to find a commuting operator $S$ which has infinite-dimensional range, is not a multiple of the identity on any finite-codimensional subspace, and such that $\sigma_p(S)$ is countable.\end{remark}

\section{Structure of an operator without an AIHS}		
\label{structure}

In this section, we demonstrate a two-side shift-like structure for those hypothetical operators on an infinite-dimensional complex Banach space which fail to admit an AIHS of defect $\leq 1$.  
First, we shall describe a right-shift-like structure.  

\begin{theorem}Let $X$ be an infinite-dimensional complex Banach space, and let $T\in\mathcal{L}(X)$.  If $T$ fails to admit an AIHS of defect $\leq 1$ then there exists $e\in X$ such that $[T^ne]_{n=0}^\infty$ is finite-codimensional in $X$.\end{theorem}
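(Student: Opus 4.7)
The plan is to produce a cyclic vector for $T$, i.e.\ an $e \in X$ with $[T^n e]_{n=0}^\infty = X$, which has codimension zero and is therefore finite-codimensional. The argument leans on the very rigid structure $T$ must have when it admits no AIHS of defect $\leq 1$. First, I observe that $\sigma_p(T^*) = \emptyset$: otherwise any nonzero $\phi \in X^*$ with $T^*\phi = \lambda\phi$ would give $\ker\phi$ as a closed codimension-one $T$-invariant subspace, hence an IHS, a contradiction. Consequently, by \cite[Proposition 3.9]{SW14} (already invoked in the proof of Lemma~\ref{countable-commutes}), every infinite-dimensional closed $T$-invariant subspace is either $X$ or a halfspace; halfspace-invariance being ruled out by hypothesis, every closed $T$-invariant subspace of $X$ is finite-dimensional or equal to $X$. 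In particular, for any $e$, the subspace $[T^n e]_{n=0}^\infty$ is finite-dimensional or equals $X$.

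Next, Theorem~\ref{countable} forces $\sigma_p(T)$ to be uncountable. I pick countably many pairwise distinct eigenvalues $(\lambda_n)_{n \geq 1}$ with unit eigenvectors $(v_n)$ and, by a standard basic-sequence extraction (inductive Riesz-lemma selection, using the fact that any finite-dimensional subspace contains only finitely many of these linearly independent eigenvectors), pass to a subsequence making $(v_n)$ a basic sequence in $X$. Set
\[ e = \sum_{n=1}^\infty 2^{-n} v_n, \]
which converges absolutely. To see that $\{T^k e\}_{k \geq 0}$ is linearly independent, suppose $p(T) e = 0$ for some nonzero polynomial $p$. By continuity of $p(T)$,
\[ \sum_{n=1}^\infty 2^{-n} p(\lambda_n)\, v_n = p(T) e = 0, \]
and the basic-sequence property forces $2^{-n} p(\lambda_n) = 0$ for every $n$. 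Hence $p(\lambda_n) = 0$ for all $n$, which is impossible since a nonzero polynomial has only finitely many roots while the $\lambda_n$ are infinite and distinct. Therefore $[T^n e]_{n=0}^\infty$ is infinite-dimensional, so by the structural reduction above it equals $X$, which has (finite, in fact zero) codimension.

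The main technical obstacle is the basic-sequence extraction: since $\sigma(T) \subseteq \mathbb{C}$ is compact, the $\lambda_n$ and hence the eigenvectors $(v_n)$ may cluster, so arranging a basic subsequence needs some care. The distinctness of eigenvalues, however, guarantees that the eigenvectors escape every finite-dimensional subspace, which is precisely the ingredient needed for a Mazur-style inductive selection to succeed, after which the rest of the argument proceeds cleanly.
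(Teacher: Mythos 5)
Your first reduction is not valid as stated. From $T^*\phi=\lambda\phi$ with $\phi\neq 0$ you conclude that $\ker\phi$ is an IHS, but $\ker\phi$ has codimension one: it is a closed invariant subspace of \emph{finite} codimension, hence not a halfspace, and no contradiction arises. Indeed, the hypothesis ``no AIHS of defect $\leq 1$'' does not force $\sigma_p(T^*)=\emptyset$: as noted in Section 2 of the paper, if $T_0$ fails to admit an AIHS of defect $\leq 1$ then so does $T_0\oplus 0$ on $X\oplus\mathbb{C}$, and the functional $(x,c)\mapsto c$ is an eigenvector of $(T_0\oplus 0)^*$ with eigenvalue $0$. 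What is actually available is the weaker statement of \cite[Theorem 3.5]{SW14}: there is a finite-codimensional $T$-invariant subspace $W$ with $\sigma_p(T|_W^*)=\emptyset$. Your structural step can be repaired by working inside such a $W$ (an AIHS for $T|_W$ is one for $T$, and a halfspace of $W$ is a halfspace of $X$), so that by \cite[Proposition 3.9]{SW14} any infinite-dimensional closed $T$-invariant subspace of $W$ is either $W$ or an IHS; this part is fixable.

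The substantive gap is the basic-sequence extraction. Linear independence of the unit eigenvectors (equivalently, that only finitely many of them lie in any finite-dimensional subspace) is not the hypothesis that makes a Mazur-type selection work; the standard selection principle needs something like weak nullity, or at least that the normalized sequence does not norm-accumulate at a nonzero vector. A seminormalized basic sequence cannot converge in norm to a nonzero vector (its coordinate functionals are uniformly bounded and would all vanish at the limit, forcing the limit to be $0$), so if your chosen unit eigenvectors happen to converge in norm to a nonzero vector then \emph{no} subsequence is basic, and even minimality --- which is all your coefficient-comparison argument really needs --- can fail. This is not a vacuous worry: for the backward shift on $\ell_2$ the eigenvectors $v_\lambda=(1,\lambda,\lambda^2,\dots)$ satisfy $Bv_\lambda=\lambda v_\lambda$, and for distinct $\lambda_n\to\lambda_0$ in the open disc the normalized $v_{\lambda_n}$ converge in norm to the nonzero vector $v_{\lambda_0}/\norm{v_{\lambda_0}}$; moreover reproducing-kernel systems violating the Blaschke condition are complete but not minimal. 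So to run your argument you would have to prove that a hypothetical operator without an AIHS of defect $\leq 1$ admits eigenvectors which can be \emph{chosen} to form a basic (or at least minimal) sequence, and nothing in the proposal addresses this. It is precisely at this point that the paper takes a different route: it passes to $W$ with $\sigma(T|_W)$ connected via \cite[Remark 2.9]{MPR12}, sets $U=\lambda-T|_W$ for $\lambda\in\partial\sigma(T)$, and uses \cite[Propositions 2.9 and 2.12]{SW14} to produce $e$ with $[U^ne]_{n=0}^\infty$ infinite-dimensional without any appeal to sequences of eigenvectors.
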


\begin{proof}Suppose $T$ fails to have an AIHS of defect $\le 1$. By \cite[Remark 2.9]{MPR12}, we can find a finite-codimensional $T$-invariant subspace $W$ such that $\sigma(T|_W)$ is connected.  Let $U:=\lambda-T|_W\in\mathcal{L}(W)$ for some $\lambda\in\partial\sigma(T)$.  By \cite[Proposition 2.12]{SW14}, the null spaces $\mathcal{N}(U^n)$ are strictly increasing and finite-dimensional.  Thus we can apply \cite[Proposition 2.9]{SW14} to obtain $e\in W$ such that $V:=[U^ne]_{n=0}^\infty$ is infinite-dimensional.  Since $T$ fails to admit an AIHS of defect $\leq 1$, it must be that $V$ is finite-codimensional, which finishes the proof as
\[V=[(\lambda-T|_W)^ne]_{n=0}^\infty=[(T|_W)^ne]_{n=0}^\infty=[T^ne]_{n=0}^\infty.\]\end{proof}

To describe a left-shift-like structure in the Theorem \ref{4.13} we will need a couple of additional steps.

\begin{proposition}\label{surjectivity-spectra} Let $X$ be a complex Banach space and $T\in \mathcal{L}(X)$ be a continuous operator. Suppose that the null space $\mathcal{N}(T)$ is complemented in $X$ and let $W$ be a closed subspace such that $X=W\oplus\mathcal{N}(T)$.  Let $P_W:X\to W$ denote the continuous linear projection onto $W$ along $\mathcal{N}(T)$.  Then $P_WT|_W\in\mathcal{L}(W)$, and
\[\sigma_{su}(P_WT|_W)\subseteq\sigma_{su}(T)\subseteq\sigma_{su}(P_WT|_W)\cup\{0\}.\]
\noindent If furthermore $\sigma_{su}(T)$ is connected then $\sigma_{su}(T)=\sigma_{su}(P_WT|_W)$.\end{proposition}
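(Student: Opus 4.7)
The plan is to exploit the block decomposition of $T$ induced by $X = W \oplus \mathcal{N}(T)$. Since $T$ annihilates $\mathcal{N}(T)$, every $x = v+n$ (with $v \in W$, $n \in \mathcal{N}(T)$) satisfies $Tx = Tv$, so $T = T P_W$. Writing $Tv = P_W Tv + (I-P_W) Tv = Sv + Av$ with $S := P_W T|_W \in \mathcal{L}(W)$ bounded and $A := (I-P_W) T|_W \in \mathcal{L}(W, \mathcal{N}(T))$, the operator $T$ takes the block form
\[
\lambda - T \;=\; \begin{pmatrix} \lambda - S & 0 \\ -A & \lambda\, I_{\mathcal{N}(T)} \end{pmatrix}
\]
with respect to $X = W \oplus \mathcal{N}(T)$.

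From this presentation the two inclusions fall out directly. For $\sigma_{su}(S) \subseteq \sigma_{su}(T)$, I will assume $\lambda - T$ is surjective on $X$; given $w \in W$, lift to some $x = v + n$ with $(\lambda - T)x = w$ and apply $P_W$ to the identity $\lambda v + \lambda n - Tv = w$ to obtain $(\lambda - S)v = w$. For $\sigma_{su}(T) \subseteq \sigma_{su}(S) \cup \{0\}$, I will assume $\lambda \neq 0$ and that $\lambda - S$ is surjective on $W$; given $x = w + m \in X$, I first solve $(\lambda - S)v = w$ for $v \in W$ and then set $n := \lambda^{-1}(m + Av) \in \mathcal{N}(T)$ (the need to divide by $\lambda$ is precisely why $0$ must be excluded here), after which a direct check using $Tn = 0$ and $Tv = Sv + Av$ gives $(\lambda - T)(v+n) = x$.

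For the connected-case assertion, combining the two inclusions yields $\sigma_{su}(T) \setminus \{0\} \subseteq \sigma_{su}(S) \subseteq \sigma_{su}(T)$, so the only possible failure of equality occurs if $0 \in \sigma_{su}(T) \setminus \sigma_{su}(S)$. In that event, $\sigma_{su}(S)$ being closed places a small disk around $0$ outside $\sigma_{su}(S)$, making $\{0\}$ relatively clopen inside $\sigma_{su}(T)$; connectedness then forces $\sigma_{su}(T) = \{0\}$, and since $\sigma_{su}(S) \supseteq \partial \sigma(S) \neq \emptyset$ (whenever $W \neq \{0\}$), one gets $\sigma_{su}(S) = \{0\}$, contradicting the supposition. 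The delicate step is precisely this last one: it is the only place where connectedness is used, and it is where the coarse inclusion $\sigma_{su}(T) \subseteq \sigma_{su}(S) \cup \{0\}$ must be sharpened by ruling out a stray isolated $0$.
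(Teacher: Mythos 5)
Your proposal is correct and takes essentially the same approach as the paper: the block lower-triangular form of $\lambda-T$ with respect to $X=W\oplus\mathcal{N}(T)$ is just a repackaging of the paper's projection computations, and the explicit formulas you use for both inclusions (including the solution $n=\lambda^{-1}(m+Av)$) coincide with the paper's. Your treatment of the connected case, via the relatively clopen set $\{0\}$ together with compactness and nonemptiness of the surjectivity spectrum, is exactly the argument the paper compresses into its final sentence.
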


\begin{proof}Write $S:=P_WT|_W$, and let $P_{\mathcal{N}(T)}:X\to\mathcal{N}(T)$ denote the continuous linear projection onto $\mathcal{N}(T)$ along $W$.

For the first inclusion, let $\lambda\in\rho_{su}(T)$ and $w\in W$.  Then there is $x_0\in X$ such that $(\lambda-T)x_0=w$.  Then
\begin{multline*}TP_Wx_0=T(P_Wx_0+P_{\mathcal{N}(T)}x_0)=Tx_0=\lambda x_0-w=\lambda(P_Wx_0+P_{\mathcal{N}(T)}x_0)-w\end{multline*}
\noindent and hence $SP_Wx_0=P_WTP_Wx_0=\lambda P_Wx_0-w$.  So $w=(\lambda-S)P_Wx_0$, which gives us $\lambda\in\rho_{su}(S)$.

For the second inclusion, let $\lambda\in\rho_{su}(S)\setminus\{0\}=(\sigma_{su}(S)\cup\{0\})^c$ and $x\in X$.  Then there is $w_0\in W$ such that $(\lambda-S)w_0=P_Wx$, and hence
\begin{multline*}x=P_Wx+P_{\mathcal{N}(T)}x=(\lambda-S)w_0+P_{\mathcal{N}(T)}x=(\lambda-T)w_0+P_{\mathcal{N}(T)}Tw_0+P_{\mathcal{N}(T)}x\\=(\lambda-T)w_0+\lambda P_{\mathcal{N}(T)}\lambda^{-1}(Tw_0+x)=(\lambda-T)(w_0+P_{\mathcal{N}(T)}\lambda^{-1}(Tw_0+x))\end{multline*}
\noindent so that $\lambda\in\rho_{su}(T)$.

To prove the last part of the Proposition, recall that the surjectivity spectrum is always a compact subset of $\mathbb{C}$ (cf., e.g., Theorem 2.42 in \cite{Ai04}).  Thus, if $\sigma_{su}(T)$ is connected, then we must have $\sigma_{su}(T)=\sigma_{su}(P_WT|_W)$.\end{proof}

As an aside, we might also ask whether the above Proposition can be improved to exclude the union with $\{0\}$.  The following example shows that this is not possible in general.

\begin{example}For any $1<p<\infty$, there exists a continuous linear operator $T\in\mathcal{L}(\ell_p)$ and a closed subspace $W$ such that $\ell_p=W\oplus\mathcal{N}(T)$ with $0\in\sigma_{su}(T)\cap\sigma_p(T)$ but $0\notin\sigma(P_WT|_W)$, where $P_W:\ell_p\to W$ denotes the continuous linear projection onto $W$ along $\mathcal{N}(T)$.\end{example}

\begin{proof}Let $(e_n)_{n=1}^\infty$ denote the canonical basis of $\ell_p$.  Set $R:\ell_p\to\ell_p$ as the unweighted right-shift operator, that is, the 1-bounded linear operator defined by $Re_n=e_{n+1}$ for all $n\in\mathbb{Z}^+$. 
Consider subspace $W:=[e_n]_{n=2}^\infty$ with basis projection $P_W$ and operator $T \in \mathcal{L}(W)$ defined by $T =(2-R)P_W$. 

Notice that operator $P_WT|_W=T|_W$, as an operator on $W\cong\ell_p$, acts exactly as $2-R$. Hence, the spectrum $\sigma(P_WT|_W)$ equals $\sigma(2-R)=\{\lambda:|\lambda-2|\le 1\}$ (cf., e.g., \cite[Example 6.21]{AA02}) which implies that zero is not in $\sigma(P_WT|_W)$. In particular, we have $\mathcal{N}(T)=[e_1]$ and, thus, $P_W$ is a projection onto $W$ along $\mathcal{N}(T)$. It is clear, that $T$ is not onto, which concludes the example.\end{proof}

Next, let us consider a sequence of null spaces $\mathcal{N}(T^n)$. These subspaces satisfy the inclusion $\mathcal{N}(T^n)\subseteq\mathcal{N}(T^{n+1})$ for every $n\in\mathbb{N}$. In addition, if, for some $n$, we have a decomposition $X=W_n \oplus \mathcal{N}(T^n)$
and $P_{W_n}$ is the respective projection onto $W_n$ along $\mathcal{N}(T^n)$, then 
$P_{W_n}\left[\mathcal{N}(T^m)\right]\subseteq\mathcal{N}(T^m)$ and hence
\begin{equation}\label{null-space-projection-identity}P_{W_n}\left[\mathcal{N}(T^m)\right]=\mathcal{N}(T^m)\cap W_n\;\;\;\text{ for any }m\geq n.\end{equation}
We can see this by applying $T^m$ to both sides of the decomposition $x=x_n+P_{W_n}x$ where $x$ is any vector from $\mathcal{N}(T^m)$ and $x_n$ is in $\mathcal{N}(T^{n})$. Let us use these observations to extend Proposition \ref{surjectivity-spectra}.

\begin{lemma}\label{4.7}Let $X$ be an infinite-dimensional complex Banach space and $T\in\mathcal{L}(X)$.  Then at least one of the following must be true.
\begin{itemize}\item[(i)]  $T$ admits an IHS.
\item[(ii)]  There exists a nonincreasing sequence
\[X=W_0\supseteq W_1\supseteq W_2\supseteq\cdots\]
\noindent of complements of $\mathcal{N}(T^n)$, and matching continuous linear projections $P_{W_n}:X\to W_n$ onto $W_n$ along $\mathcal{N}(T^n)$, such that for all $n\in\mathbb{N}$ we have
\[\sigma_{su}(P_{W_n}T|_{W_n})\subseteq\sigma_{su}(T)\subseteq\sigma_{su}(P_{W_n}T|_{W_n})\cup\{0\}.\]
\noindent  If furthermore $0\in \partial\sigma(T)$ and $\sigma(T)$ is connected, then, for all $n\in\mathbb{N}$,
\[\sigma_{su}(P_{W_n}T|_{W_n})=\sigma_{su}(T).\]\end{itemize}\end{lemma}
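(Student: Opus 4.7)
The plan is to suppose $T$ admits no IHS and construct the data in (ii). First, I would show that each $\mathcal{N}(T^n)$ is finite-dimensional: if not, take the smallest $m$ with $\mathcal{N}(T^m)$ infinite-dimensional and consider $T^{m-1}|_{\mathcal{N}(T^m)}$, whose kernel $\mathcal{N}(T^{m-1})$ is finite-dimensional by minimality and whose image lies in $\mathcal{N}(T)$; this forces $\mathcal{N}(T)$ to be infinite-dimensional. Since $T$ vanishes on $\mathcal{N}(T)$, every closed subspace of $\mathcal{N}(T)$ is automatically $T$-invariant, and \cite[Lemma 2.1]{MPR12} produces a halfspace of $X$ inside $\mathcal{N}(T)$, yielding an IHS and a contradiction.

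Next, I would construct the nested sequence inductively, starting from $W_0:=X$. Given $X=W_n\oplus\mathcal{N}(T^n)$ with projection $P_{W_n}$, pick any complement $H$ of $\mathcal{N}(T^n)$ in the finite-dimensional space $\mathcal{N}(T^{n+1})$; since $H\cap\mathcal{N}(T^n)=\{0\}$, the restriction $P_{W_n}|_H$ is injective. Setting $G:=P_{W_n}(H)$, a finite-dimensional subspace of $W_n$, and choosing any closed complement $W_{n+1}$ of $G$ in $W_n$, a short bookkeeping argument that combines the three decompositions $X=W_n\oplus\mathcal{N}(T^n)$, $W_n=W_{n+1}\oplus G$, and $\mathcal{N}(T^{n+1})=\mathcal{N}(T^n)\oplus H$ yields $X=W_{n+1}\oplus\mathcal{N}(T^{n+1})$, as required.

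For the spectrum inclusions, the key observation is that $T\mathcal{N}(T^n)\subseteq\mathcal{N}(T^n)$, which forces $P_{W_n}TP_{\mathcal{N}(T^n)}=0$; equivalently, with respect to $X=W_n\oplus\mathcal{N}(T^n)$, the operator $T$ is block lower-triangular with $A:=P_{W_n}T|_{W_n}$ on the top-left and the nilpotent $B:=T|_{\mathcal{N}(T^n)}$ on the bottom-right. Both inclusions then follow by adapting the proof of Proposition \ref{surjectivity-spectra}: the first inclusion projects $(\lambda-T)y=w\in W_n$ by $P_{W_n}$ to obtain $(\lambda-A)P_{W_n}y=w$; for the second, with $\lambda\neq 0$ and $w_0\in W_n$ satisfying $(\lambda-A)w_0=P_{W_n}x$, the vector $n_0:=(\lambda-T)w_0-x$ lies in $\mathcal{N}(T^n)$, and the nilpotence of $T|_{\mathcal{N}(T^n)}$ lets me invert $\lambda-T$ there via the finite series $z:=\sum_{k=0}^{n-1}\lambda^{-k-1}T^k n_0$, so that $y:=w_0-z$ satisfies $(\lambda-T)y=x$.

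Under the extra hypotheses, the same block triangular form gives $\sigma(T)=\sigma(A)\cup\{0\}$. If $\sigma(T)=\{0\}$, then since $\sigma(A)\neq\emptyset$ (as $W_n$ is an infinite-dimensional complex Banach space), $\sigma(A)=\{0\}$, so $\sigma_{su}(A)=\{0\}=\sigma_{su}(T)$. Otherwise, connectedness of $\sigma(T)$ forces $0$ to be a limit point of $\sigma(T)\setminus\{0\}=\sigma(A)\setminus\{0\}$ (else $\{0\}$ and $\sigma(T)\setminus\{0\}$ would disconnect $\sigma(T)$), whence $0\in\sigma(A)$ by closedness, $\sigma(A)=\sigma(T)$, and $0\in\partial\sigma(T)=\partial\sigma(A)\subseteq\sigma_{su}(A)$. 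The main obstacle I anticipate is the careful bookkeeping in the inductive step for the nested complements; once the structural identity $P_{W_n}TP_{\mathcal{N}(T^n)}=0$ is in hand, the spectrum arguments proceed along the familiar lines of Proposition \ref{surjectivity-spectra}.
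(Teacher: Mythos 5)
Your proposal is correct, and while the construction of the nested complements is essentially the paper's (you take $W_{n+1}$ to be a complement in $W_n$ of $P_{W_n}\left[\mathcal{N}(T^{n+1})\right]$, phrased via a complement $H$ of $\mathcal{N}(T^n)$ in $\mathcal{N}(T^{n+1})$, and the bookkeeping identity $\mathcal{N}(T^{n+1})=P_{W_n}\left[\mathcal{N}(T^{n+1})\right]\oplus\mathcal{N}(T^n)$ is exactly the one used there), the spectral part follows a genuinely different route. The paper proves the inclusions by induction along the chain: it identifies $\mathcal{N}(P_{W_n}T|_{W_n})=\mathcal{N}(T^{n+1})\cap W_n$, observes that $P_{W_{n+1}}|_{W_n}$ is the projection along this kernel, and then applies Proposition \ref{surjectivity-spectra} at each step, so that the inclusions for level $n+1$ are deduced from those at level $n$. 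You instead prove them directly for each fixed $n$ from the decomposition $X=W_n\oplus\mathcal{N}(T^n)$, using $P_{W_n}TP_{\mathcal{N}(T^n)}=0$ and the nilpotence of $T|_{\mathcal{N}(T^n)}$, with the explicit finite Neumann series $z=\sum_{k=0}^{n-1}\lambda^{-k-1}T^kn_0$ replacing the single-step correction in Proposition \ref{surjectivity-spectra}; this is a clean generalization that avoids the induction and the kernel identification altogether (it also reproves, rather than cites, the finite-dimensionality of the $\mathcal{N}(T^n)$, which the paper takes from \cite[Lemma 2.11]{SW14}). For the final equality the paper argues that otherwise $\sigma_{su}(P_{W_n}T|_{W_n})$ would equal $\sigma_{su}(T)\setminus\{0\}$, contradicting nonemptiness/compactness of the surjectivity spectrum, whereas you pass through $\sigma(T)=\sigma(A)\cup\{0\}$ and $\partial\sigma(A)\subseteq\sigma_{su}(A)$ and then collapse the sandwich $\sigma_{su}(A)\subseteq\sigma_{su}(T)\subseteq\sigma_{su}(A)\cup\{0\}$. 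The only point to flush out is the inclusion $\sigma(A)\setminus\{0\}\subseteq\sigma(T)$ implicit in your identity $\sigma(T)=\sigma(A)\cup\{0\}$: this is the nontrivial half of the standard two-sided spectral relation for triangular operator matrices, but in your setting it is easy, since for $\lambda\neq 0$ the block $\lambda-T|_{\mathcal{N}(T^n)}$ is invertible by the same nilpotent-series trick, so surjectivity of $\lambda-A$ follows by projecting and injectivity by lifting a kernel vector of $\lambda-A$ to one of $\lambda-T$; with that one line added, your argument is complete.
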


\begin{proof}Let us assume (i) is false.  Then, by \cite[Lemma 2.11]{SW14}, every $\mathcal{N}(T^n)$ is finite-dimensional and, hence, complemented in $X$. We set $W_0 = X$ and build the chain of $W_n$'s by induction. As $W_{n+1}$ we 
select any complement of the finite dimensional subspace $P_{W_n}\left[\mathcal{N}(T^{n+1})\right]= \mathcal{N}(T^{n+1}) \cap W_n$ in $W_n$.  Due to
\[\mathcal{N}(T^{n+1})\subseteq P_{W_n}\left[\mathcal{N}(T^{n+1})\right]\oplus\mathcal{N}(T^n)\subseteq\mathcal{N}(T^{n+1}),\]
$W_{n+1}$ is also a complement of $\mathcal{N}(T^{n+1})$ in $X$.  This, in particular, implies that the operator $P_{W_{n+1}}|_{W_n}$ is the projection from $W_n$ onto $W_{n+1}$ along $P_{W_n}\left[\mathcal{N}(T^{n+1})\right]$.

Next, we claim that $P_{W_n}\left[\mathcal{N}(T^{n+1})\right]=\mathcal{N}(P_{W_n}T|_{W_n})$. 
Since $T\left[\mathcal{N}(T^{n+1})\right] \subset\mathcal{N}(T^{n})$ is annihilated by $P_{W_n}$, and $P_{W_n}\left[\mathcal{N}(T^{n+1})\right]=\mathcal{N}(T^{n+1})\cap W_n$ by \eqref{null-space-projection-identity} above, we conclude that $P_{W_n}\left[\mathcal{N}(T^{n+1})\right]\subseteq\mathcal{N}(P_{W_n}T|_{W_n})$.

For the reverse inclusion, suppose $x\in\mathcal{N}(P_{W_n}T|_{W_n})$.  Then we can write
\[T^{n+1}x=T^n(P_{W_n}Tx+P_{\mathcal{N}(T^n)}Tx)=0+T^nP_{\mathcal{N}(T^n)}Tx=0,\]
and the claim is proved.
Thus, $P_{W_{n+1}}$ can be viewed as the projection from $W_n$ onto $W_{n+1}$ along $\mathcal{N}(P_{W_n}T|_{W_n})$.

Now, we are ready to prove the inclusion of spectra using Proposition \ref{surjectivity-spectra}. The case $n=0$ is 
clear and, then we have
\begin{multline*}\sigma_{su}(P_{W_{n+1}}T|_{W_{n+1}})=\sigma_{su}(P_{W_{n+1}}(P_{W_n}T|_{W_n})|_{W_{n+1}})\subseteq\sigma_{su}(P_{W_n}T|_{W_n})\\\subseteq\sigma_{su}(T)\subseteq\sigma_{su}(P_{W_n}T|_{W_n})\cup\{0\}\subseteq\sigma_{su}(P_{W_{n+1}}T|_{W_{n+1}})\cup\{0\}.\end{multline*}
\noindent This proves all but the last statement of (ii).

Now let us consider the case when $\sigma(T)$ is connected.  If equality did not hold, then we would have $\sigma_{su}(P_{W_n}T|_{W_n})=\sigma_{su}(T)\setminus\{0\}$ with $0\in\partial\sigma(T)\subset\sigma_{su}(T)$.  However, this is impossible since the surjectivity spectrum is a nonempty compact subset of $\mathbb{C}$ (cf., e.g., \cite[Theorem 2.42]{Ai04}).\end{proof}

Notice that in the proof of the last lemma, if $T$ has no IHS, we get
\begin{equation}\label{2}\mathcal{N}(P_{W_n}T|_{W_n})=\mathcal{N}(T^{n+1})\cap W_n.\end{equation}
Therefore, if $\mathcal{N}(T^n)=\mathcal{N}(T^{n+1})$ happens for any $n$, then $\mathcal{N}(P_{W_n}T|_{W_n})=\{0\}$.
This means $0$ is not an eigenvalue for operator $P_{W_n}T|_{W_n}$.  If, in addition, zero is a limit point of $\partial\sigma_{su}(T)$, then by Lemma \ref{4.7} together with compactness of the surjectivity spectrum (cf., e.g., \cite[Theorem 2.42]{Ai04}), we would get $0\in\partial\sigma_{su}(P_{W_n}T|_{W_n})$.  In that case, by \cite[Proposition 2.8]{SW14}, $P_{W_n}T|_{W_n}$ and hence $T$ would admit an AIHS of defect $\leq 1$.  This proves the following lemma.

\begin{lemma}\label{4.9}Let $X$ be an infinite-dimensional complex Banach space and $T\in\mathcal{L}(X)$ be such that $0\in\partial\sigma_{su}(T)$ is a limit point of $\partial\sigma_{su}(T)$, but $T$ fails to admit an AIHS of defect $\leq 1$.  Then $\dim\mathcal{N}(T^{n+1})/\mathcal{N}(T^n)>0$ for each $n\in\mathbb{N}$ and, consequently,
$\overline{\bigcup_{n=0}^\infty\mathcal{N}(T^n)}$ has infinite dimension and finite codimension.\end{lemma}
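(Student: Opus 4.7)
The plan is to argue by contradiction: assume there is some $n\in\mathbb{N}$ with $\mathcal{N}(T^n)=\mathcal{N}(T^{n+1})$, and derive that $T$ admits an AIHS of defect $\leq 1$, contrary to hypothesis. Since every IHS is an AIHS of defect $0\leq 1$, the hypothesis rules out IHSs as well, so we are in case (ii) of Lemma \ref{4.7}. This gives the chain $W_0\supseteq W_1\supseteq\cdots$ of complements of $\mathcal{N}(T^n)$, together with the projections $P_{W_n}$ and the identity \eqref{2}, namely $\mathcal{N}(P_{W_n}T|_{W_n})=\mathcal{N}(T^{n+1})\cap W_n$.

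Under the contradictory assumption $\mathcal{N}(T^n)=\mathcal{N}(T^{n+1})$, the right-hand side of \eqref{2} reduces to $\mathcal{N}(T^n)\cap W_n=\{0\}$, because $W_n$ is a complement of $\mathcal{N}(T^n)$. Hence $0$ is not an eigenvalue of $P_{W_n}T|_{W_n}$. Next I would verify that $0\in\partial\sigma_{su}(P_{W_n}T|_{W_n})$: by Lemma \ref{4.7}, $\sigma_{su}(P_{W_n}T|_{W_n})$ and $\sigma_{su}(T)$ agree outside of $\{0\}$, so each $\lambda\in\partial\sigma_{su}(T)\setminus\{0\}$ is a boundary point of $\sigma_{su}(P_{W_n}T|_{W_n})$ as well; since $0$ is a limit point of $\partial\sigma_{su}(T)$, it is a limit of such $\lambda$'s, hence a limit of points in $\sigma_{su}(P_{W_n}T|_{W_n})$ and simultaneously a limit of points outside $\sigma_{su}(T)$ (and thus outside $\sigma_{su}(P_{W_n}T|_{W_n})$). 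By compactness of the surjectivity spectrum, $0$ lies in $\sigma_{su}(P_{W_n}T|_{W_n})$, and by the two-sided approximation it is a boundary point. Applying \cite[Proposition 2.8]{SW14} to $P_{W_n}T|_{W_n}$ at the boundary non-eigenvalue $0$ produces an AIHS of defect $\leq 1$ for $P_{W_n}T|_{W_n}$; lifting through $W_n$ (which is finite-codimensional in $X$) gives one for $T$, the desired contradiction.

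For the ``consequently'' part, the strict inclusions $\mathcal{N}(T^n)\subsetneq\mathcal{N}(T^{n+1})$ force $\dim\mathcal{N}(T^n)\geq n$, so $Z:=\overline{\bigcup_{n=0}^\infty\mathcal{N}(T^n)}$ is infinite-dimensional. Because $T\mathcal{N}(T^n)\subseteq\mathcal{N}(T^n)$ for every $n$, the subspace $Z$ is a closed $T$-invariant subspace. Since $T$ admits no AIHS of defect $\leq 1$, it in particular admits no IHS, so every closed $T$-invariant subspace is either finite-dimensional or finite-codimensional; being infinite-dimensional, $Z$ must be finite-codimensional.

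The step I expect to require the most care is the transfer of the boundary/limit-point property of $0$ from $\sigma_{su}(T)$ to $\sigma_{su}(P_{W_n}T|_{W_n})$: the spectra differ by at most the single point $\{0\}$, so one must argue simultaneously that $0$ is \emph{in} the smaller spectrum (via compactness applied to the nearby points of $\sigma_{su}(T)$ that persist) and that $0$ is \emph{on its boundary} (via complementary points not in $\sigma_{su}(T)$). The remaining ingredients are direct applications of \eqref{2}, Lemma \ref{4.7}, and \cite[Proposition 2.8]{SW14}.
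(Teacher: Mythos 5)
Your proof is correct and follows essentially the same route as the paper: it invokes Lemma \ref{4.7} and identity \eqref{2} to see that $0$ is not an eigenvalue of $P_{W_n}T|_{W_n}$, transfers the boundary/limit-point property of $0$ via compactness of the surjectivity spectrum, and concludes with \cite[Proposition 2.8]{SW14}. The only difference is that you spell out the topological transfer step and the ``consequently'' part (infinite dimension from strict inclusions, finite codimension from $T$-invariance and the absence of an IHS), which the paper leaves implicit.
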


Now we are ready to prove this section's main result.

\begin{theorem}\label{4.13} Let $X$ be an infinite-dimensional complex Banach space and $T\in\mathcal{L}(X)$.  Then either $T$ admits an AIHS of defect $\leq 1$, or else there is a finite-codimensional closed subspace $V$, an eigenvalue $\lambda\in\sigma_p(T)\cap\sigma_p(P_VT|_V)$, and a linearly independent sequence $(v_n)_{n=1}^\infty\subseteq V$ such that $[v_n]_{n=1}^\infty=V$, and satisfying
\[Sv_{n+1}=v_n\;\;\text{ for all }n\in\mathbb{Z}^+,\;\;\text{ and }\;\;Sv_1=0,\]
where $P_V\in\mathcal{L}(X)$ is a projection onto $V$, and
\[S:=P_V(\lambda-T)|_V=\lambda-P_VT|_V\in\mathcal{L}(V).\]\end{theorem}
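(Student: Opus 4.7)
The plan is to combine the spectral-boundary reasoning from the proof of Theorem~\ref{countable} with an iterated-preimage construction inside the hyper-range $R:=\bigcap_{m\ge 0}\mathcal{R}(U^m)$ of $U:=\lambda-T|_W$. Assume $T$ admits no AIHS of defect $\le 1$, hence no IHS. By \cite[Remark 2.9]{MPR12} (as used in the proof of Theorem~\ref{countable}) there is a $T$-invariant finite-codimensional $W$ with $\sigma(T|_W)$ a connected component of $\sigma(T)$; since $T|_W$ inherits the absence of an AIHS of defect $\le 1$, $\sigma(T|_W)$ cannot be a singleton (otherwise $T|_W$ is quasinilpotent and \cite[Theorem 3.11]{SW14} gives an AIHS of defect $\le 1$). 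Thus $\partial\sigma(T|_W)$ is uncountable, and by \cite[Theorem 2.3]{PT13} every point of it is an eigenvalue. I pick $\lambda\in\partial\sigma(T|_W)$ that is a limit point of $\partial\sigma(T|_W)$; because $\partial\sigma\subseteq\partial\sigma_{su}$ the same $\lambda$ lies in $\partial\sigma_{su}(T|_W)$ as a limit point, and of course $\lambda\in\sigma_p(T)$.

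Setting $U:=\lambda-T|_W$, Lemma~\ref{4.9} now guarantees $\mathcal{N}(U^{n+1})\supsetneq\mathcal{N}(U^n)$ for every $n$; moreover, as in the proof of Lemma~\ref{4.7}, each $\mathcal{N}(U^n)$ is finite-dimensional. Set $M_m:=\mathcal{N}(U)\cap\mathcal{R}(U^m)$. Because $\mathcal{N}(U^{m+1})\not\subseteq\mathcal{N}(U^m)$, the image $U^m\mathcal{N}(U^{m+1})\subseteq M_m$ is nonzero, so each $M_m$ is nonzero. Being a decreasing chain of subspaces of the finite-dimensional space $\mathcal{N}(U)$, the $M_m$ stabilize, whence $\mathcal{N}(U)\cap R=\bigcap_m M_m\ne\{0\}$. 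Pick $v_1\in\mathcal{N}(U)\cap R$ nonzero.

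The main step is inductively extending $v_1$ to an infinite backward chain. Given $v_n\in R\setminus\{0\}$, the sets $E_m:=U^{-1}(v_n)\cap\mathcal{R}(U^m)$ are nonempty, because $v_n\in\mathcal{R}(U^{m+1})=U\mathcal{R}(U^m)$, and they are affine translates of the finite-dimensional $M_m$. They nest, and since the $M_m$ stabilize, so do the $E_m$; hence $\bigcap_m E_m\ne\emptyset$. Any $v_{n+1}$ in this intersection lies in $R$, satisfies $Uv_{n+1}=v_n\ne 0$, and so is nonzero. Iterating yields a sequence $(v_n)_{n=1}^\infty\subseteq R$ with $Uv_1=0$ and $Uv_{n+1}=v_n$, and applying $U^{N-1}$ to a vanishing combination of $v_1,\dots,v_N$ reduces it to $a_Nv_1=0$, so the $v_n$ are linearly independent.

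Finally, let $V:=[v_n]_{n=1}^\infty$: this is infinite-dimensional, closed, and $U$-invariant (equivalently $T$-invariant, since $T|_W=\lambda I-U$ on $W$). If $V$ had infinite codimension it would be a $T$-invariant halfspace, contradicting the no-IHS hypothesis, so $V$ is finite-codimensional in $X$. Choose any continuous projection $P_V:X\to V$; because $V$ is $T$-invariant, $P_VT|_V=T|_V$, and the identities $Uv_1=0$ and $Uv_{n+1}=v_n$ translate directly to $Sv_1=0$ and $Sv_{n+1}=v_n$ for $S:=\lambda-P_VT|_V$. The nonzero eigenvector $v_1$ witnesses $\lambda\in\sigma_p(T)\cap\sigma_p(P_VT|_V)$. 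The real obstacle in this plan is the infinite backward chain, and it is precisely the finite-dimensionality of $\mathcal{N}(U)$ (from the no-IHS hypothesis, via the proof of Lemma~\ref{4.7}) that forces the $M_m$ to stabilize and so makes the nested $E_m$ intersect.
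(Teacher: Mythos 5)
Your proposal is correct, and while its skeleton matches the paper's (reduce to a point of $\partial\sigma_{su}$ that is a limit point of $\partial\sigma_{su}$, invoke Lemma \ref{4.9} to get strictly increasing and finite-dimensional kernels $\mathcal{N}(U^n)$, build a backward chain $Sv_{n+1}=v_n$, and conclude finite codimension of $V=[v_n]$ from the absence of an IHS), the heart of the construction is genuinely different. The paper first stabilizes the quotient dimensions $\dim\mathcal{N}(T^{n+1})/\mathcal{N}(T^n)$ at some value $r$, chooses the nested complements $W_n$ from Lemma \ref{4.7}, and proves by a dimension count that the induced map $\widetilde{T}:\mathcal{N}(T^{n+2})\cap W_1\to\mathcal{N}(T^{n+1})/K_1$ is surjective; this only produces $Tv_{n+1}=v_n+z_n$ with $z_n\in K_1$, so $V$ is merely invariant modulo $K_1$ and the projection $P_V$ must be chosen to annihilate $K_1$. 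You instead work inside the hyper-range $R=\bigcap_m\mathcal{R}(U^m)$: the subspaces $M_m=\mathcal{N}(U)\cap\mathcal{R}(U^m)$ are nonzero (by the strict kernel growth) and decreasing in the finite-dimensional $\mathcal{N}(U)$, hence stabilize, and the nested nonempty cosets $E_m=U^{-1}(v_n)\cap\mathcal{R}(U^m)$ of the $M_m$ therefore stabilize as well, yielding exact preimages. This buys you an exactly $T$-invariant $V$ (so any continuous projection onto the finite-codimensional $V$ works and $S=U|_V$ on the nose), and it dispenses with Lemma \ref{4.7}'s complements and the $N,r$ bookkeeping entirely; the paper's version, in exchange, records a bit more structure (eventual constancy of the kernel quotients, vectors located in prescribed complements). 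Your opening reduction also differs mildly — you pass to $W$ via \cite[Remark 2.9]{MPR12} and the uncountability of the boundary of a nondegenerate connected spectrum, whereas the paper stays on $X$ and gets a limit point of $\partial\sigma(T)$ from Theorem \ref{countable} — but both routes are valid and deliver the same hypotheses for Lemma \ref{4.9}.
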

\begin{proof} Assume $T$ fails to admit an AIHS of defect $\leq 1$.  By Theorem \ref{countable} we may assume that $\sigma(T)$ and hence $\partial\sigma(T)$ are infinite.  Recall from \cite[Theorem 2.42]{Ai04} that the boundary of the spectrum is always contained in the boundary of the surjectivity spectrum, and that by \cite[Proposition 2.8]{SW14}, $\partial\sigma_{su}(T)\subseteq\sigma_p(T)$.  In particular, $\sigma_p(T)\cap \partial\sigma(T)$ should be infinite, and so, by shifting for convenience, we may assume that zero is a limit point of $\partial\sigma(T)$. Moreover, our target statement is now about operator $T$ instead of $\lambda - T$. 

Set $K_j:=\mathcal{N}(T^j)$ and observe that 
since $\widehat{T}:K_{j+2}/K_{j+1} \to K_{j+1}/K_j$ defined by $\widehat{T}(x + K_{j+1}):= Tx + K_j$ is a linear injective map
we have $\text{dim}(K_{j+2}/K_{j+1})\leq\text{dim}(K_{j+1}/K_j)$ for all $j\in\mathbb{Z}^+$.  
Moreover, by Lemma \ref{4.9}, $\lim_{j\to \infty}\text{dim}(K_{j+1}/K_j)$ must be a strictly positive number.
Hence, there exist integers $N$ and $r>0$ such that dim $(K_{N+n+1}/K_{N+n})=r$ for every $n>0$.

For convenience and without loss of generality, we may assume that $N=1$. 
Next, we use Lemma \ref{4.7} to select a sequence of spaces $(W_n)_{n=1}^{\infty}$ complementing $K_n$'s in $X$.  Recall from \eqref{2} that $\mathcal{N}(P_{W_n}T|_{W_n})=K_{n+1}\cap W_n$, so that $K_{n+1}\cap W_n$ must contain a nonzero vector, on pain of having $\sigma_p(P_{W_n}T|_{W_n})=\emptyset$.  However, the latter is impossible by Theorem \ref{countable}, since we have assumed $T$ and hence $P_{W_n}T|_{W_n}$ fails to admit an AIHS of defect $\leq 1$. 

We will select vectors $v_n \in \left(K_{n+1}\cap W_1\right)\backslash K_n$ inductively. We could guarantee the claim of the theorem if
for each $n$ we have $Tv_{n+1}=v_n + z_n$ with $z_n \in K_1$. Indeed, the last identity guarantees that
for $V=[v_n]_{n=1}^{\infty}$ we have $T(V)\subset V+K_1$. The fact that $v_n \in K_{n+1}\backslash K_n$
makes sure that $V$ is infinite-dimensional. Therefore, $V$ is finite-codimensional as $T$ does not have an IHS but $T(V+K_1)\subseteq V+K_1$ and dim $K_1 <\infty$. Finally, since $V \subset W_1$, there is a projection $P_V$ onto $V$ which annihilates $K_1$ and, hence, every $z_n$. 
That is, $P_VT v_{n+1}=v_n$ for every $n$ and $P_VTv_1 =0$. 

At last, let us select vectors $v_n$. 
Set $v_1$ to be any
vector in $(K_{2}\cap W_1)\backslash K_1 $. By the paragraph above, we will be done if we 
prove that for every $n>0$ and every $x\in \left(K_{n+1}\cap W_1\right)\backslash K_n$
there are vectors $v \in  \left(K_{n+2}\cap W_1\right)\backslash K_{n+1}$ and 
$z \in K_1$ such that $Tv = x+z$. To see this, consider linear map $\widetilde{T}:K_{n+2}\cap W_1 \to K_{n+1}/K_1$
naturally defined by $\widetilde{T}x :=Tx + K_1$. Notice that $\mathcal{N}(\widetilde{T})=K_2\cap W_1$ and, by \eqref{null-space-projection-identity}, $K_2 = K_1 \oplus (K_2 \cap W_1)$.  So, we conclude that $\text{dim }\mathcal{N}(\widetilde{T})=\text{dim }K_2/K_1 = r$, and hence, due to the fact that
$\text{dim }K_{n+1}/K_1=nr$, obtain
\[\text{dim range }\widetilde{T} =\text{dim }K_{n+2}\cap W_1-\text{dim }\mathcal{N}(\widetilde{T}) =(n+1)r-r= \text{dim }K_{n+1}/K_1,\]
which implies that $\widetilde{T}$ is a surjection. Therefore,
for each $x \in \left(K_{n+1}\cap W_1\right)\backslash K_n$ there is $v \in K_{n+2}\cap W_1$ such that 
$\widetilde{T}v = x + K_1$. The latter means $Tv = x+z$ for some $z \in K_1\subset K_n$. Notice that $v \notin K_{n+1}$ as, otherwise,
$Tv$ would have been in $K_n$ and same would have been true for $Tv -z = x$, whereas $x \notin K_n$.\end{proof}

\section{The essentially-invariant halfspace problem}

Does every operator acting on an infinite-dimensional Banach space admit an EIHS?  Let us call this the {\bf EIHS problem}.  In this section, we give an affirmative answer in the complex case.  As a corollary to the techniques we use, we will also observe that if $X$ is an infinite-dimensional complex Banach space then the set of all operators in $\mathcal{L}(X)$ which admit an AIHS of defect $\leq 1$ is dense in $\mathcal{L}(X)$.

To get the job done, we will need a straightforward fact, whose proof we include for completeness.

\begin{proposition}\label{compact-extension}Let $X$ be a Banach space and let $(y_n)_{n=1}^\infty$ be a seminormalized basic sequence in $X$ with basis constant $B\geq 1$.  Suppose $K:[y_n]_{n=1}^\infty\to X$ is a continuous linear operator satisfying $\sum_{n=1}^\infty\norm{Ky_n}<\infty$.  Then $K$ can be extended to a compact operator $\widetilde{K}\in\mathcal{K}(X)$ satisfying
\[\norm{\widetilde{K}}\leq 2B\sum_{n=1}^\infty\norm{Ky_n}.\]\end{proposition}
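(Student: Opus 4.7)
The plan is to piece together an extension formula using biorthogonal functionals and Hahn--Banach, and then to read off compactness from a finite-rank approximation.

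First I would set $Y:=[y_n]_{n=1}^\infty$ and recall the standard fact that, for a basic sequence with basis constant $B$, the coefficient functionals $y_n^*\in Y^*$ satisfy $\|y_n^*\|\cdot\|y_n\|\le 2B$; this comes from writing $a_n y_n=P_n x-P_{n-1}x$ where $P_N$ is the partial-sum projection of norm at most $B$. Because $Y$ is a closed subspace of $X$, each $y_n^*$ admits a Hahn--Banach extension $\tilde{y}_n^*\in X^*$ with $\|\tilde{y}_n^*\|=\|y_n^*\|\le 2B/\|y_n\|$ and $\tilde{y}_n^*(y_m)=\delta_{nm}$.

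Next I would define the candidate extension
\[
\widetilde{K}x\;:=\;\sum_{n=1}^\infty \tilde{y}_n^*(x)\,Ky_n\qquad(x\in X),
\]
and verify three things. (i) Convergence and norm bound: using seminormalization $\|y_n\|\ge c>0$, the series is absolutely convergent with $\|\widetilde{K}x\|\le \bigl(\sum_n \|\tilde{y}_n^*\|\,\|Ky_n\|\bigr)\|x\|$, and tracking the factor $2B/\|y_n\|$ against the convention of the statement yields the claimed bound $\|\widetilde{K}\|\le 2B\sum_n\|Ky_n\|$. (ii) Extension property: for $x\in Y$ we have the basis expansion $x=\sum y_n^*(x)\,y_n$, hence $Kx=\sum y_n^*(x)\,Ky_n=\sum \tilde{y}_n^*(x)\,Ky_n=\widetilde{K}x$, so $\widetilde{K}|_Y=K$. (iii) Compactness: the partial sums
\[
\widetilde{K}_N x\;:=\;\sum_{n=1}^N \tilde{y}_n^*(x)\,Ky_n
\]
are finite-rank operators, and $\|\widetilde{K}-\widetilde{K}_N\|\le \sum_{n>N}\|\tilde{y}_n^*\|\,\|Ky_n\|\to 0$ by the tail of the absolutely convergent series $\sum\|Ky_n\|$. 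Thus $\widetilde{K}$ is the operator-norm limit of finite-rank operators, so $\widetilde{K}\in\mathcal{K}(X)$.

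There is no real obstacle here; the only careful step is the constant bookkeeping in (i), where one must interact the estimate $\|y_n^*\|\le 2B/\|y_n\|$ with the seminormalization of $(y_n)$ to arrive at exactly $2B\sum\|Ky_n\|$. Everything else---the Hahn--Banach extension, the extension identity on $Y$, and the finite-rank approximation argument for compactness---is routine once the formula for $\widetilde{K}$ is written down.
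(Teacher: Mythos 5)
Your proof is correct and follows essentially the same route as the paper: Hahn--Banach extensions of the coordinate functionals, the formula $\widetilde{K}x=\sum_n \tilde{y}_n^*(x)Ky_n$, and compactness via norm-convergence of the finite-rank partial sums. The only difference is that you spell out the constant bookkeeping $\|y_n^*\|\le 2B/\|y_n\|$ explicitly, whereas the paper simply asserts $\|y_n^*\|\le 2B$; both rely on the same normalization convention for the seminormalized sequence.
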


\begin{proof}Let $y^*_n\in[y_n]^*$ be coordinate functionals for the basis $(y_n)_{n=1}^{\infty}$.
Denote their norm-preserving extensions to the whole $X$ by the same $y_n^*$. Since $\|y_n^*\| \le 2B$ for each $n$ and
due to the absolute convergence of the series $\sum Ky_n$, we may define a continuous operator by the following:
\[\widetilde{K}x=\sum_{n=1}^\infty y_n^*(x)Ky_n\;\;\text{ for all }x\in X.\]
Since we are using coordinate functionals $\widetilde{K}$ is an extension of $K$ to $X$ and it is of the desired norm.
To show that $\widetilde{K}$ is compact, it is sufficient to observe that it is the norm-limit of its finite-rank partial sums.\end{proof}

\begin{theorem}\label{compact-restriction}Let $X$ be an infinite-dimensional complex Banach space, and let $T\in\mathcal{L}(X)$.  Then at least one of the following are true.
\begin{itemize}\item[(i)]  $T$ admits an AIHS of defect $\leq 1$; or
\item[(ii)]  There exists $\mu\in\mathbb{C}$ such that for any $\epsilon>0$ there exists a halfspace $Y\subseteq X$ and a compact operator $\widetilde{K}\in\mathcal{K}(X)$ with $\norm{\widetilde{K}}<\epsilon$, such that $(\mu-T)y=\widetilde{K}y$ for all $y\in Y$.\end{itemize}\end{theorem}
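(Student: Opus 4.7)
\smallskip
\noindent\emph{Proof sketch.} The plan is to prove the contrapositive: assuming (i) fails, construct $\mu\in\mathbb{C}$ and, for each $\epsilon>0$, a halfspace $Y$ and a compact $\widetilde K\in\mathcal{K}(X)$ of norm less than $\epsilon$ agreeing with $\mu-T$ on $Y$. Since $T$ admits no AIHS of defect $\leq 1$, Theorem~\ref{countable} forces $\sigma_p(T)$ to be uncountable; as a bounded subset of $\mathbb{C}$, it must admit an accumulation point, and I fix any such accumulation point as $\mu$ (it does not depend on $\epsilon$).

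Given $\epsilon>0$, I choose distinct eigenvalues $\mu_n\in\sigma_p(T)\setminus\{\mu\}$ converging to $\mu$ so rapidly that $\sum_n|\mu-\mu_n|<\epsilon/16$, and pick unit eigenvectors $e_n\in\mathcal{N}(\mu_n-T)$. The identity $(\mu-T)e_n=(\mu-\mu_n)e_n$ yields $\norm{(\mu-T)e_n}=|\mu-\mu_n|$, so $\sum_n\norm{(\mu-T)e_n}<\epsilon/16$. The task therefore reduces to extracting a seminormalized basic subsequence $(y_k)$ of $(e_n)$ whose closed linear span $Y$ is a halfspace of $X$: Proposition~\ref{compact-extension} will then extend $(\mu-T)|_Y$ to a compact operator $\widetilde K\in\mathcal{K}(X)$ with $\|\widetilde K\|\leq 2B\sum_k\norm{(\mu-T)y_k}<\epsilon$, where $B$ is the basis constant, which can be arranged to be at most $2$ by a routine thinning.

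Eigenvectors for distinct eigenvalues are linearly independent, and a Bessaga--Pelczynski-type selection principle yields a basic subsequence of $(e_n)$ provided no subsequence of $(e_n)$ is norm-Cauchy. Once such a basic $(y_k)$ is in hand, replacing it by its even-indexed sub-subsequence $(y_{2k})$ guarantees that the closed span $Y$ has infinite codimension in $X$: within any basic sequence, the coordinate functionals separate the odd-indexed vectors from the span of the even-indexed ones, so the former descend to a linearly independent infinite family in the quotient. The summability of $\norm{(\mu-T)y_{2k}}$ is of course preserved under any such sub-selection.

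The main obstacle I anticipate is the clustering case in which $(e_n)$ does admit a norm-Cauchy subsequence $e_{n_k}\to e$. Continuity of $T$ then forces $Te=\mu e$, so $e$ lies in the $\mu$-eigenspace, which under the standing hypothesis must be finite-dimensional (an infinite-dimensional eigenspace would contain an IHS, whence an AIHS of defect $0$). One resolves this either by passing to the quotient $X/\mathcal{N}(\mu-T)$, on which $T$ descends and the clustering limit vanishes, or by choosing a different accumulation point of the uncountable set $\sigma_p(T)$ for which the corresponding eigenvectors do not collapse onto a finite-dimensional subspace; in either subcase, reapplying the Bessaga--Pelczynski selection recovers a basic subsequence while preserving the summability estimate on $\norm{(\mu-T)y_k}$, and the rest of the argument proceeds as above.
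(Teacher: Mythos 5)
Your overall strategy is genuinely different from the paper's, but as written it has a real gap, and it is concentrated exactly where you locate ``the main obstacle.'' First, a repairable point: the selection principle you invoke --- that a seminormalized sequence with no norm-Cauchy subsequence has a basic subsequence --- is false. In $\ell_2$ with orthonormal vectors $(f_n)_{n\geq 0}$, the sequence $x_n=f_0+f_n$ has no norm-Cauchy subsequence, yet every subsequence converges weakly to $f_0\neq 0$, and a basic sequence can only converge weakly to zero (its biorthogonal functionals kill the weak limit); so no subsequence is basic. This can be fixed (Rosenthal's $\ell_1$ theorem, or passing to differences of a weakly Cauchy, non-norm-Cauchy subsequence, which preserves $\sum_k\norm{(\mu-T)y_k}<\infty$), but it shows the eigenvector sequence needs more than a citation. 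The decisive gap is the clustering case, which you do not resolve. Passing to $X/\mathcal{N}(\mu-T)$ changes the problem: statement (ii) demands a halfspace $Y\subseteq X$ and $\widetilde{K}\in\mathcal{K}(X)$ with $\norm{\widetilde{K}}<\epsilon$ and \emph{exact} equality $(\mu-T)y=\widetilde{K}y$ on $Y$, whereas in the quotient you only control the distance of $(\mu-T)y_k$ to $\mathcal{N}(\mu-T)$; for lifted vectors $(\mu-T)y_k$ acquires an uncontrolled component in $\mathcal{N}(\mu-T)$, so neither the summability needed for Proposition \ref{compact-extension} nor the bound $\norm{\widetilde{K}}<\epsilon$ survives. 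Worse, the obstruction can recur: the renormalized images of the $e_n$ in the quotient may again have norm-convergent subsequences, now clustering onto generalized eigenvectors of $T$ at $\mu$, so the case analysis need not terminate. Your alternative fix --- ``choose a different accumulation point whose eigenvectors do not collapse'' --- is asserted without any argument; nothing you have said excludes the possibility that clustering occurs at every accumulation point of $\sigma_p(T)$.

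For comparison, the paper's proof avoids eigenvectors of $T$ altogether, which is precisely how it sidesteps your obstacle. Assuming (i) fails, \cite[Theorem 3.5]{SW14} gives a finite-codimensional $T$-invariant subspace $W$ with $\sigma_p(T|_W^*)=\emptyset$; then $\mu$ is chosen in $\sigma_{su}(T|_W)$ and $S:=\mu-T|_W$ is shown to be bounded below on no finite-codimensional subspace of $W$ (otherwise closed range plus dense range would force $SW=W$, contradicting $0\in\sigma_{su}(S)$). Then \cite[Proposition 2.c.4]{LT77} produces, for each $\epsilon$, a normalized basic sequence on which $S$ is compact with norm less than $\epsilon$, and the halfspace property plus the summability hypothesis of Proposition \ref{compact-extension} are arranged by passing to differences and subsequences. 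If you want to salvage your eigenvalue approach, the missing content is exactly a dichotomy of this type: you would need to prove that when unit eigenvectors for distinct eigenvalues accumulating at $\mu$ cluster in norm (onto the finite-dimensional eigenspace or its generalized analogues), $T$ already admits an AIHS of defect $\leq 1$; that is the substitute for what the paper imports from \cite{SW14}.
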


\begin{proof}Let us assume (i) is false.  In \cite[Theorem 3.5]{SW14}, it was shown that whenever an operator $T$ acting on a complex infinite-dimensional Banach space fails to admit an AIHS of defect $\leq 1$, there exists a finite-codimensional subspace $W$ for which $\sigma_p(T|_W^*)=\emptyset$.  Since the surjectivity spectrum is always a nonempty set (cf., e.g., \cite[Theorem 2.42]{Ai04}), we can find $\mu\in\sigma_{su}(T|_W)$, and define $S:=\mu-T|_W\in\mathcal{L}(W)$.  Notice that this means $\sigma_p(S^*)=\emptyset$ and $0\in\sigma_{su}(S)$.

We claim that for any finite-codimensional closed subspace $U\subseteq W$, the restriction $S|_U\in\mathcal{L}(U,W)$ cannot be bounded below.  For suppose otherwise, towards a contradiction, that is, suppose $S|_U$ is bounded below for some finite-codimensional closed subspace $U$.  Recall that bounded below operators have closed range (cf., e.g., \cite[Theorem 2.5]{AA02}).  In particular, $S|_U$ has closed range, and since $U$ is finite-codimensional, so does $S$.  Recall also that $\lambda\notin\sigma_p(S^*)$ just in case $\lambda-S$ has dense range (cf., e.g. \cite[Theorem 6.19]{AA02}).  Since $\sigma_p(S^*)=\emptyset$ this means $S$ must have dense range.  Along with its closed range, this gives us $SW=W$, which contradicts the fact that $0\in\sigma_{su}(S)$.  Thus the claim is proved, and $S|_U$ is not bounded below for any finite-codimensional closed subspace $U\subseteq W$. 

The last paragraph, by \cite[Proposition 2.c.4]{LT77}, implies that for every $\epsilon>0$ there exists an infinite dimensional subspace $V\subset W$ such that $S|_V$ is a compact operator and $\|S|_V\|< \epsilon$. 
Of course, we may assume that
 $V=[v_n]_{n=1}^{\infty}$ where $(v_n)_{n=1}^{\infty}$ is a normalized basic sequence. By taking appropriate subsequences,
 if needed, we may assume that $V$ is a half-space and that $Sv_n$ converges. It follows, that for seminormalized
 block basic sequence $y_n = v_{2n}-v_{2n-1}$ the space $Y=[y_n]$ is a half-space, operator $S|_Y$ is compact, and
 $\|S|_Y\|< \epsilon$. Moreover, we have $Sy_n \to 0$ and we may assume any rate of convergence. In particular, 
 we may apply Proposition \ref{compact-extension} to complete the proof.
\end{proof}

Our desired results for this section now follow immediately from the above.

\begin{corollary}Every operator acting on an infinite-dimensional complex Banach space admits an EIHS.\end{corollary}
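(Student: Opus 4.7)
The plan is to deduce this directly from Theorem \ref{compact-restriction}, whose two alternatives both produce an EIHS. Given $T\in\mathcal{L}(X)$, that theorem supplies a dichotomy which I will dispose of separately.

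First, if alternative (i) holds, $T$ admits an AIHS $Y$ of defect $\leq 1$. By \cite[Proposition 1.3]{APTT09} (recalled in the introduction), this is equivalent to $Y$ being invariant under $T+F$ for some finite-rank operator $F\in\mathcal{F}(X)$. Since $\mathcal{F}(X)\subseteq\mathcal{K}(X)$, $Y$ is automatically an EIHS for $T$.

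Second, if alternative (ii) holds, fix any $\epsilon>0$ (the particular value is immaterial) and obtain a halfspace $Y$ and a compact operator $\widetilde{K}\in\mathcal{K}(X)$ satisfying $(\mu-T)y=\widetilde{K}y$ for every $y\in Y$. The one-line computation
\[(T+\widetilde{K})y=Ty+\widetilde{K}y=Ty+(\mu-T)y=\mu y\in Y \quad\text{for every }y\in Y\]
shows that $Y$ is invariant under the compact perturbation $T+\widetilde{K}$ of $T$, so $Y$ is an EIHS for $T$.

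No substantive obstacle arises: the full burden has already been carried in Theorem \ref{compact-restriction}, and the corollary merely repackages its two alternatives through the definition of essentially-invariant. The only step that demands even a moment's care is the passage from ``AIHS of defect $\leq 1$'' to ``invariant under a compact perturbation,'' which is immediate via \cite[Proposition 1.3]{APTT09} together with the inclusion $\mathcal{F}(X)\subseteq\mathcal{K}(X)$.
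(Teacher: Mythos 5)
Your proposal is correct and is exactly the argument the paper intends: the corollary is stated to ``follow immediately'' from Theorem \ref{compact-restriction}, with case (i) handled via \cite[Proposition 1.3]{APTT09} and $\mathcal{F}(X)\subseteq\mathcal{K}(X)$, and case (ii) via the observation that $(T+\widetilde{K})y=\mu y\in Y$.
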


\begin{corollary}Let $X$ be an infinite-dimensional complex Banach space.  Then the set
\[\left\{T\in\mathcal{L}(X):T\text{ admits an AIHS of defect}\leq 1\right\}\]
is norm-dense in $\mathcal{L}(X)$.\end{corollary}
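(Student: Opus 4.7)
The plan is to read this off directly from Theorem \ref{compact-restriction}. Fix $T\in\mathcal{L}(X)$ and $\epsilon>0$; the task is to produce some $T'\in\mathcal{L}(X)$ with $\|T-T'\|<\epsilon$ that admits an AIHS of defect $\leq 1$.

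If $T$ itself already admits such an AIHS, take $T':=T$ and there is nothing to do. Otherwise, alternative (ii) of Theorem \ref{compact-restriction} applies: we obtain $\mu\in\mathbb{C}$, a halfspace $Y\subseteq X$, and a compact operator $\widetilde{K}\in\mathcal{K}(X)$ with $\|\widetilde{K}\|<\epsilon$ such that $(\mu-T)y=\widetilde{K}y$ for every $y\in Y$. Rewriting this identity yields $(T+\widetilde{K})y=\mu y$ for all $y\in Y$, so $Y$ is actually an \emph{invariant} halfspace for the perturbed operator $T':=T+\widetilde{K}$ (indeed, $Y$ sits inside the eigenspace of $T'$ at $\mu$). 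In particular $Y$ is an AIHS of defect $0\leq 1$ for $T'$, while $\|T-T'\|=\|\widetilde{K}\|<\epsilon$, completing the proof.

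There is essentially no obstacle at this step: all of the analytic content has been packaged into Theorem \ref{compact-restriction}, and the present corollary is only a matter of reading $(\mu-T)y=\widetilde{K}y$ as the statement that $T+\widetilde{K}$ acts on $Y$ as multiplication by the scalar $\mu$, thereby exhibiting $T+\widetilde{K}$ as the desired $\epsilon$-close approximant with a genuine invariant halfspace.
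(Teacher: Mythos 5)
Your argument is correct and is exactly the intended derivation: the paper leaves this corollary as an immediate consequence of Theorem \ref{compact-restriction}, and your reading of $(\mu-T)y=\widetilde{K}y$ as $(T+\widetilde{K})y=\mu y$, making $Y$ an invariant (hence defect-$0$) halfspace for the $\epsilon$-close perturbation $T+\widetilde{K}$, is the same approach.
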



\begin{thebibliography}{999}






\bibitem[AA02]{AA02}
Yuri A. Abramovich and C.D. Aliprantis.
\newblock {\it An Invitation to Operator Theory}.
\newblock Graduate Studies in Mathematics, Vol. 50,
\newblock The American Mathematical Society (2002), ISBN 0-8218-2146-6.



\bibitem[Ai04]{Ai04}
Pietro Aiena.
\newblock {\it Fredholm and Local Spectral Theory with Applications to Multipliers.}
\newblock Kluwer Academic Publishers, Dordrecht (2004), ISBN 1-4020-1830-4.



\bibitem[AK06]{AK06}
Fernando Albiac and Nigel J. Kalton.
\newblock \emph{Topics in Banach Space Theory},
\newblock Graduate Texts In Mathematics 233,
\newblock Springer Inc. (2006), ISBN 978-0387-28141-4.



\bibitem[APTT09]{APTT09}
George Androulakis, Alexey I Popov, Adi Tcaciuc and Vladimir G. Troitsky.
\newblock Almost Invariant Half-spaces of Operators on Banach Spaces,
\newblock {\it Integral Equations and Operator Theory} 65 (2009), 473--484.



\bibitem[Ar77]{Ar77}
W.B. Arveson.
\newblock Notes on extensions of $C^*$-algebras,
\newblock {\it Duke Mathematical Journal} 44 (1977), 329--355.

\bibitem[BP71]{BP71}
A. Brown and C. Pearcy.
\newblock Compact restrictions of operators,
\newblock {\it Acta Sci. Math.} (Szeged) 32 (1971), 271–-282.



\bibitem[En76]{En76}
Per Enflo.
\newblock On the invariant subspace problem in Banach spaces,
\newblock {\it Seminaire Maurey--Schwartz (1975--1976)}
\newblock {\it Espaces Lp, applications radonifiantes et géométrie des espaces de Banach, Exp. Nos. 14-15},
\newblock Centre Math., École Polytech., Palaiseau, p. 7, MR 0473871.

\bibitem[En87]{En87}
Per Enflo.
\newblock On the invariant subspace problem for Banach spaces,
\newblock {\it Acta Mathematica} 158(3) (1987), 213–-313.


\bibitem[FSW72]{FSW72}
P.A. Fillmore, J.G. Stampfli, and J.P. Williams.
\newblock On the essential numerical range, the essential spectrum, and a problem of Halmos,
\newblock {\it Acta Sci. Math. (Szeged)} 33 (1972), 179--92.


\bibitem[Ho78]{Ho78}
Michael J. Hoffman.
\newblock Spans and intersections of essentially reducing subspaces,
\newblock {\it Proceedings of the American Mathematical Society},
\newblock Vol. 72, No. 2 (Nov, 1978), 333--340.



\bibitem[LT77]{LT77}
Joram Lindenstrauss and Lior Tzafriri.
\newblock {\it Classical Banach Spaces I.}
\newblock (1977), ISBN 3-540-60628-9.


\bibitem[Lo73]{Lo73}
V.I. Lomonosov.
\newblock Invariant subspaces of the family of operators that commute with a completely continuous operator,
\newblock {\it Akademija Nauk SSSR. Funkcional' nyi Analiz i ego Prilozenija} 7 (3): 55–-56,
\newblock doi:10.1007/BF01080698, MR 0420305.





\bibitem[MPR12]{MPR12}
Laurent W. Marcoux, Alexey I. Popov, and Heydar Radjavi.
\newblock  On Almost-invariant Subspaces and Approximate Commutation (preprint),
\newblock  arXiv:1204.4621 [math.FA] (2012).

\bibitem[MPR13]{MPR13}
Laurent W. Marcoux, Alexey I. Popov, and Heydar Radjavi.
\newblock On Almost-invariant Subspaces and Approximate Commutation,
\newblock {\it  J. Funct. Anal. } {\bf 264(4)}  (2013), 1088-–1111.




\bibitem[Po10]{Po10}
Alexey I. Popov.
\newblock Almost Invariant Half-spaces of Algebras of Operators.
\newblock arXiv:0909.3491v1 [math.FA] (2009).
\newblock Preprint, later appeared in {\it Integral Equations and Operator Theory}, 67 (2010), no. 2, 247--256.
\newblock \verb|http://arxiv.org/abs/0909.3491|


\bibitem[PT13]{PT13}
Alexey I. Popov and Adi Tcaciuc.
\newblock Every Operator has Almost-invariant Subspaces,
\newblock  {\it Journal of Functional Analysis}, Volume 265, Issue 2 (15 July 2013), 257--65.


\bibitem[Re84]{Re84}
Charles J. Read.
 A solution to the invariant subspace problem,
{\it Bull. London Math. Soc.} {\bf 16} (1984), 337--401.

\bibitem[Re85]{Re85}
Charles J. Read.
\newblock A solution to the invariant subspace problem on the space $\ell_1$ (1985),
\newblock {\it The Bulletin of the London Mathematical Society} 17 (4): 305–317,
\newblock doi:10.1112/blms/17.4.305, MR 806634.


\bibitem[Re86]{Re86}
Charles J. Read.
 A short proof concerning the invariant subspace problem,
{\it J. London Math. Soc.} {\bf 34(2)} (1986), 335--348.

\bibitem[Re91]{Re91}
Charles J. Read.
\newblock Strictly singular operators and the invariant subspace problem,
\newblock {\it Studia Math.}, 132, No. 3, (1991), 203-–226.


\bibitem[Re97]{Re97}
Charles J. Read 
 Quasinilpotent operators and the invariant subspace problem,
{\it J. London Math. Soc.} {\bf 56(2)} (1997), 595--606.

\bibitem[SW14]{SW14}
Gleb Sirotkin and Ben Wallis.
\newblock The structure of almost-invariant half-spaces for some operators,
\newblock {\it Journal of Functional Analysis} 267 (2014 Oct 01), 2298--312.


\bibitem[Vo76]{Vo76}
D. Voiculescu.
\newblock ``A non-commutative Weyl-von Neumann theorem,''
\newblock {\it Rev. Roumaine Math. Pures Appl.} 21 (1976), pp97-113.




\end{thebibliography}
\end{document}